\numberwithin{equation}{section}
\theoremstyle{definition} 
\newtheorem{theorem}{Theorem}
\newtheorem{definition}{Definition}
\newtheorem{lemma}{Lemma}
\newtheorem{example}{Example}
\newtheorem{corollary}{Corollary}
\newtheorem{remark}{Remark}
\newtheorem{proposition}{Proposition}
\newcommand{\unif}[1]{{\rm Uni}_{#1}}
\newcommand{\PP}{\mathrm{P}}
\newcommand{\TV}{\mathrm{TV}}
\newcommand{\PF}[1]{\mathrm{PF}_{#1}}
\newcommand{\Sym}{\mathfrak{S}}
\newenvironment{customthm}[1]
  {\innercustomthm}
  {\endinnercustomthm}
\title{Probabilistic Parking Functions}
\author[Durmi\'{c}]{Irfan Durmi\'{c}}
\address[Durmi\'{c}, Han]{Mathematics and Statistics, Williams College, Williamstown, MA 01267}
\email{\textcolor{blue}{\href{mailto:id5@williams.edu}{id5@williams.edu}},\textcolor{blue}{\href{mailto:alh5@williams.edu}{alh5@williams.edu}}}
\author[Han]{Alex Han}
\author[Harris]{Pamela E. Harris}
\address[Harris]{Department of Mathematical Sciences, University of Wisconsin-Milwaukee, Milwaukee, WI 53211}
\email{\textcolor{blue}{\href{mailto:peharris@uwm.edu}{peharris@uwm.edu}}}
\thanks{P.~E.~Harris was supported through a Karen Uhlenbeck EDGE Fellowship.}
\author[Ribeiro]{Rodrigo Ribeiro}
\address[Ribeiro, Yin]{Department of Mathematics, University of Denver, Denver, CO 80208}
\email{\textcolor{blue}{\href{mailto:rodrigo.ribeiro@du.edu}{rodrigo.ribeiro@du.edu}},\textcolor{blue}{\href{mailto:mei.yin@du.edu}{mei.yin@du.edu}}}
\author[Yin]{Mei Yin}
\thanks{M.~Yin was supported by the University of Denver's Faculty Research Fund 84688-145601 and Professional Research Opportunities for Faculty Fund 80369-145601.}
\begin{document}

\begin{abstract}

We consider the notion of classical parking functions by introducing randomness and a new parking protocol, as inspired by the work presented in the paper ``Parking Functions: Choose your own adventure,'' (arXiv:2001.04817)  by Carlson, Christensen, Harris, Jones, and Rodr\'iguez. 
Among our results, we prove that the probability of obtaining a parking function, from a length $n$ preference vector, is independent of the probabilistic parameter $p$.
We also explore the properties of a preference vector given that it is a parking function and discuss the effect of the probabilistic parameter $p$. Of special interest is when $p=1/2$, where we demonstrate a sharp transition in some parking statistics.
We also present several interesting combinatorial consequences of the parking protocol. 
In particular, we 
provide a combinatorial interpretation for the array described in OEIS A220884 as the expected number of preference sequences with a particular property related to occupied parking spots,  which solves an open problem of Novelli and Thibon posed in 2020 (arXiv:1209.5959). Lastly, we connect our results to other weighted phenomena in combinatorics and provide further directions for research.  
\end{abstract}
\maketitle

\section{Introduction} \label{sec:introduction}
The classical parking process occurs on a one-way street with $n\in\mathbb{N}=\{1,2,3,\ldots\}$ parking spots numbered in sequence from one to $n$. In this setting, $n$ cars enter the street, from left to right, each having a parking preference, which we compile in a vector $\alpha=(a_1,a_2,\ldots,a_n)\in[n]^n$, where $[n]=\{1,2,\ldots,n\}$. Each car enters the street and proceeds to their preferred parking spot.
If they find it unoccupied they park there, else they continue down the street parking in the next available spot. 
If under this parking protocol all of the cars are able to park, then we say that the preference vector $\alpha$ is a parking function of length $n$.
For example, any preference vector that is a permutation is a parking function as every car parks in its preferred parking spot. In addition the all ones vector $\alpha=(1,1,\ldots,1)\in[n]^n$ is also a parking function in which, for each $1\leq i\leq n$, car $i$ parks in spot $i$. However, not every parking preference is a parking function, take for example the vector $(n,n,\ldots,n)$ in which case the first car parks in its preferred spot $n$, while the remaining $n-1$ cars, having the same preference, find their spot occupied and then exit the street. 

First explored by Konheim and Weiss in the 1960s \cite{Konheim}, classical parking functions have received considerable attention by mathematical researchers. If $\PF{n}$ denotes the set of all parking functions of length $n$, then it is well-known that $|\PF{n}|=(n+1)^{n-1}$. Many generalizations of parking functions exist and include considering the case where there are more parking spots than cars, considering cases where cars prefer an interval of parking spots rather than a single parking spot, and others include changing the parking procedure and allowing cars to do something different whenever they find their preferred spot occupied. 
For an expository article with recent finding and many open problems related to parking functions we recommend  \cite{carlson2020parking}.

We remark that although parking functions, and their generalizations, are well-studied there has been less work done on considering a probabilistic parking protocol. 
With this point of view in mind, we provide some initial results involving a probabilistic parking protocol. Our initial result considers the following probabilistic scenario:  Fix $p\in[0,1]$ and consider a coin which flips to heads with probability $p$ and tails with probability $1-p$. Then, our parking protocol proceeds as follows: If a car arrives at its preferred spot and finds it unoccupied it parks there. 
If instead the spot is occupied, then the driver tosses the biased coin. If the coin lands on heads, with probability $p$, the driver continues moving forward in the street. However, if the coin lands on tails, with probability $1-p$, the car moves backwards and tries to find an unoccupied parking spot.
Given this new probabilistic parking function protocol we determine the likelihood that a preference vector $\alpha\in[n]^n$ is a parking function. We also explore the properties of $\alpha$ given that it is a parking function and demonstrate the effect of the parameter $p$ (and lack thereof) on the parking protocol. 
Before proceeding further, we note a parking symmetry imposed by this probabilistic parking protocol: Having $n$ cars enter the street from left to right with preference vector $\alpha=(a_1, \dots, a_n)$ and park under protocol with parameter $p$ depicts the same scenario as having $n$ cars enter the street from right to left with preference vector $\alpha'=(n+1-a_1, \dots, n+1-a_n)$ and park under protocol with parameter $1-p$.

Due to the probabilistic nature of our model, we offer a word of caution in interpreting our results. 
In the case of classical parking functions, a preference vector $\alpha \in [n]^n$ is either deterministically a parking function or not, and we can talk about the set of parking functions $\PF{n} \subseteq [n]^n$ and its cardinality $|\PF{n}|$. 
Contrarily, by incorporating a probabilistic parking protocol, all preference vectors $\alpha \in [n]^n$ have a positive probability of being a parking function. 
In stating our results, $\alpha \in \PF{n}$ depicts the situation that $n$ cars with preference vector $\alpha$ park, and we are interested in the probability that this happens and its further implications. 

Our main results are the following.\footnote{While writing a first draft of this work, we became aware of parallel, independent work by Tian and Trevino \cite{tian2020generalizing}. Aside from both introducing the probabilistic parking protocol, the results of the two papers are essentially disjoint, except that Theorem \ref{theorem:pollakcoin} appeared as Theorem 4 in \cite{tian2020generalizing} with a different proof.}

\begin{theorem}\label{theorem:pollakcoin}
Consider the preference vector $\alpha \in [n]^n$, chosen uniformly at random. Then
\begin{eqnarray}\label{eqn:pollakcoin}
\PP(\alpha \in \PF{n} | \alpha \in [n]^n) = \frac{(n+1)^{n-1}}{n^n}.
\end{eqnarray}
\end{theorem}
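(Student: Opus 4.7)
The plan is to adapt the classical Pollak circular argument to the probabilistic setting, so that the probability $p$ drops out by a symmetry argument. Consider $n+1$ parking spots arranged on a circle, labeled $1, 2, \ldots, n+1$, and $n$ cars with preferences $\alpha = (a_1, \ldots, a_n)$ drawn uniformly from $[n+1]^n$. Cars park under the same probabilistic protocol, with ``forward'' interpreted as clockwise (increasing label mod $n+1$) and ``backward'' as counterclockwise. Because there are more spots than cars, every car parks, and exactly one spot remains empty at the end.

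The first key step is to argue, by rotational symmetry, that each spot is equally likely to be the empty one, so $\PP(\text{spot } n+1 \text{ is empty}) = \frac{1}{n+1}$. To see this, I would fix an arbitrary sequence of coin-flip outcomes $(c_1, \dots, c_n) \in \{\text{CW}, \text{CCW}\}^n$ and observe that the map $\alpha \mapsto \alpha + 1 \pmod{n+1}$ (applied coordinate-wise) is a bijection on $[n+1]^n$ that sends ``spot $j$ empty'' to ``spot $j+1$ empty.'' The uniform measure on preferences is invariant under this bijection, and crucially the coin outcomes are not altered (rotation does not swap CW and CCW), so the symmetry holds for every $p$. Averaging over coin outcomes then yields the uniform $\frac{1}{n+1}$ probability.

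The second step is to relate this circular event to the linear parking problem on $[n]$. I would show that ``spot $n+1$ empty'' is exactly the event that (a) all preferences lie in $[n]$, and (b) the linear probabilistic protocol parks every car. For (a): if some $a_i = n+1$, then when car $i$ arrives, spot $n+1$ is still empty (nobody ever parks there, by assumption), so car $i$ parks there, a contradiction. For (b): conditional on (a), the circular trajectory of any car stays within $\{1,\dots,n\}$, because otherwise it would encounter the empty spot $n+1$ and park there; hence the circular dynamics coincide with the linear dynamics, and ``spot $n+1$ empty'' is equivalent to ``all $n$ cars successfully park on the linear street.''

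Combining these observations, I would conclude
\begin{equation*}
\frac{1}{n+1} \;=\; \PP(\text{spot } n+1 \text{ empty}) \;=\; \PP\bigl(\alpha \in [n]^n\bigr)\cdot \PP\bigl(\alpha \in \PF{n} \,\big|\, \alpha \in [n]^n\bigr) \;=\; \frac{n^n}{(n+1)^n}\cdot \PP\bigl(\alpha \in \PF{n} \,\big|\, \alpha \in [n]^n\bigr),
\end{equation*}
which rearranges to $(n+1)^{n-1}/n^n$. The main obstacle, conceptually, is justifying rotational symmetry despite the asymmetric coin; the payoff is precisely that rotation commutes with the dynamics without any role for $p$, which explains why the final probability is independent of $p$.
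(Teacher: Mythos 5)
Your proposal is correct and follows essentially the same route as the paper: Pollak's circular lot with $n+1$ spots, rotational invariance of the vacant-spot distribution (valid for every $p$ because shifting all preferences commutes with the dynamics for any fixed sequence of coin flips), and the identification of ``spot $n+1$ vacant'' with ``$\alpha \in [n]^n$ and all cars park on the one-way street.'' The paper derives the uniform $1/(n+1)$ vacancy probability via a circulant matrix of counts indexed by leading preference, but that is just a more elaborate packaging of the same symmetry you invoke directly.
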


We emphasize that this result is rather surprising. 
In Example \ref{ex:3cars} we show that the probability of a particular preference vector $\alpha \in [n]^n$ being a parking function does depend on $p$. 
What is magical is that the probabilities of being a parking function for all preference vectors add up in a way so that dependence on $p$ is canceled and there is invariance to the forward probability $p$ for the randomly selected vector. 
In Section~\ref{subsec:coin-m-n}, we show that this invariance generalizes further for $m \in [n]$ cars on a one-way street with $n$ spots. That is we establish the following. 

\begin{theorem}\label{thm:m-n}
Consider the preference vector $\alpha \in [n]^m$, chosen uniformly at random. Then
\begin{eqnarray}\label{eqn:m-n}
\PP(\alpha \in \PF{n} | \alpha \in [n]^m) = \frac{(n+1-m)(n+1)^{m-1}}{n^m}.
\end{eqnarray}
\end{theorem}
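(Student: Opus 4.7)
Plan: The natural approach is to extend the circular (Pollak-style) argument that proves Theorem~\ref{theorem:pollakcoin} from the case $m=n$ to general $m\le n$.

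First, I would lift the probabilistic parking protocol to a directed circle of $n+1$ spots labeled $1,\dots,n+1$: when a car finds its preferred spot occupied, it flips the biased coin exactly once; heads (probability $p$) sends it clockwise until it locates an empty spot, while tails (probability $1-p$) sends it counterclockwise. Because there are always at least $n+1-m\ge 1$ empty spots available, every car parks regardless of the coin outcomes, so exactly $n+1-m$ spots remain empty at the end.

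Next, I would invoke rotational symmetry. For $\alpha\in[n+1]^m$ chosen uniformly, the joint law of labels and coin flips is invariant under cyclically shifting the labels of the spots, and the circular protocol is intrinsic to the circle (it commutes with such shifts). Hence the distribution of the final occupied set is rotation-invariant, and every individual spot, in particular spot $n+1$, is empty with probability $(n+1-m)/(n+1)$.

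The core step is the following event-equivalence: for $\alpha\in[n+1]^m$, spot $n+1$ is empty at the end of the circular process if and only if $\alpha\in[n]^m$ and all $m$ cars successfully park on the linear street $[n]$ under the probabilistic protocol. For the forward direction, if spot $n+1$ is empty, then no car ever visited it, so no car preferred $n+1$ and no car ever wrapped past spot $n$ forward or past spot $1$ backward; hence the circular trajectories coincide with the linear ones and all cars park in $[n]$. Conversely, if $\alpha\in[n]^m$ parks on the linear street, then the same coin flips produce identical trajectories on the circle, leaving $n+1$ untouched. Combining the two observations,
\begin{equation*}
\frac{n+1-m}{n+1}\;=\;\frac{1}{(n+1)^m}\sum_{\alpha\in[n]^m}\PP(\alpha\in\PF{n})\;=\;\frac{n^m}{(n+1)^m}\,\PP(\alpha\in\PF{n}\mid \alpha\in[n]^m),
\end{equation*}
which rearranges to the formula claimed.

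The main obstacle will be justifying this event-equivalence, especially for the backward branch of the protocol. A car on the linear street that tosses tails and encounters only occupied spots down to spot $1$ \emph{exits} (so $\alpha$ fails to park), whereas on the circle that same car wraps to spot $n+1$ and parks there. One must verify carefully that every way in which spot $n+1$ can become occupied on the circle, when $\alpha\in[n]^m$, corresponds precisely to such a failure-to-park on the linear street, so that the two events truly coincide.
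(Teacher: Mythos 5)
Your proposal is correct and follows essentially the same route as the paper: lift the protocol to a circle with $n+1$ spots, use rotational symmetry of the uniformly chosen preference vector to conclude that each spot (in particular spot $n+1$) is vacant with probability $(n+1-m)/(n+1)$, and identify the event that spot $n+1$ remains vacant with the event that $\alpha \in [n]^m$ and all $m$ cars park on the linear street. If anything, your treatment of the backward-wrapping subtlety is more explicit than the paper's, which handles it only by asserting a generalization of its Lemma~\ref{lemma:does-not-contain}.
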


Sections \ref{sec:impactppin} and \ref{sec:rate} are devoted to the natural follow-up questions: \emph{Does introducing randomness to the parking protocol achieve anything new? What kind of parking statistics does depend on the probabilistic parameter $p$?} In \ref{sec:impactppin} we present one such parking statistic, namely the parking preference of the last car $a_n$, and explore its statistical properties. In Theorem \ref{component} we give the distribution of $a_n$ and in Theorem \ref{mean} we calculate the asymptotics of its expected value, and the formulas show explicit $p$ dependence.
\begin{theorem}\label{component}
Consider the preference vector $\alpha=(a_1,a_2,\ldots,a_n) \in [n]^n$, chosen uniformly at random. Then given that $\alpha \in \PF{n}$,
\begin{align}
&\PP(a_n=j|\alpha \in \PF{n})=\frac{2}{n+1}-\frac{1}{(n+1)^{n-1}} \Big[p\sum_{s=n-j+1}^{n-1} \binom{n-1}{s} (n-s)^{n-s-2} (s+1)^{s-1}\notag \\
&\hspace{5cm}+(1-p) \sum_{s=0}^{n-j-1} \binom{n-1}{s} (n-s)^{n-s-2} (s+1)^{s-1}\Big].
\end{align}
\end{theorem}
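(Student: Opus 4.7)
My plan is to apply Bayes' rule, writing $\PP(a_n = j \mid \alpha \in \PF{n}) = \PP(a_n = j, \alpha \in \PF{n})/\PP(\alpha \in \PF{n})$ with Theorem~\ref{theorem:pollakcoin} handling the denominator. For the numerator I would condition on the single spot $k$ left unoccupied after the first $n-1$ cars park. Let $S_k$ denote the sum, over all $(a_1,\ldots,a_{n-1}) \in [n]^{n-1}$, of the probability that these first $n-1$ cars all park and leave spot $k$ vacant. When the $n$-th car arrives with preference $j$ while spot $k$ is the unique empty spot, it parks with probability $1$ if $k=j$, with probability $p$ if $k>j$ (the forward coin flip is needed), and with probability $1-p$ if $k<j$; averaging over all preference vectors of length $n$ one obtains
\begin{equation*}
\PP(a_n = j, \alpha \in \PF{n}) = \frac{1}{n^n}\Big[S_j + p\sum_{k>j} S_k + (1-p)\sum_{k<j} S_k\Big].
\end{equation*}

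Next I would evaluate $S_k$ in closed form. The key observation is that if spot $k$ is empty at the end of the first $n-1$ steps, it must have been empty throughout, so no car has preference $k$ and no car's trajectory can cross spot $k$ (crossing would require $k$ to be momentarily occupied, which would force a car to stop there). Thus the first $n-1$ preferences split into ``left'' cars (preferring $\{1,\ldots,k-1\}$) and ``right'' cars (preferring $\{k+1,\ldots,n\}$); leaving spot $k$ empty forces exactly $k-1$ of the former and $n-k$ of the latter, and the two groups evolve independently so long as neither reaches spot $k$. Each sub-process is exactly the setting of Theorem~\ref{theorem:pollakcoin} on smaller parameters. Choosing which $k-1$ of the $n-1$ indices are ``left'' (in $\binom{n-1}{k-1}$ ways) and then summing left and right preferences independently using Theorem~\ref{theorem:pollakcoin} with $n$ replaced by $k-1$ and by $n-k$ respectively, we get
\begin{equation*}
S_k = \binom{n-1}{k-1}\, k^{k-2}\,(n-k+1)^{n-k-1}.
\end{equation*}

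Finally, Theorem~\ref{thm:m-n} with $m=n-1$ yields the global identity $\sum_{k=1}^{n} S_k = 2(n+1)^{n-2}$. Rewriting the bracketed quantity above as $\sum_{k} S_k - (1-p)\sum_{k>j} S_k - p\sum_{k<j} S_k$, dividing by $\PP(\alpha \in \PF{n}) = (n+1)^{n-1}/n^n$, and reindexing with $s=n-k$ to convert $\binom{n-1}{k-1}k^{k-2}(n-k+1)^{n-k-1}$ into $\binom{n-1}{s}(n-s)^{n-s-2}(s+1)^{s-1}$, produces the constant term $\tfrac{2}{n+1}$ and the two partial sums in the statement. The main technical obstacle, I expect, is a clean justification of the left/right decoupling: one must argue that so long as no car reaches the always-empty spot $k$, each sub-group's coin flips, movements, and survival probabilities coincide exactly with those of a self-contained probabilistic parking problem on the corresponding shorter street, and that the constraint ``nobody reaches $k$'' reduces in the smaller setting to ``every car parks,'' so that Theorem~\ref{theorem:pollakcoin} applies verbatim.
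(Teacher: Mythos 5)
Your proposal is correct and follows essentially the same route as the paper: condition on the unique open spot $k$ after the first $n-1$ cars park, decouple those cars into two non-interacting sub-parking problems (the paper's ``parking function shuffle''), apply Theorem~\ref{theorem:pollakcoin} to each block to obtain $S_k=\binom{n-1}{k-1}k^{k-2}(n-k+1)^{n-k-1}$, weight by $1$, $p$, or $1-p$ according to the position of $j$ relative to $k$, and reindex with $s=n-k$. The only (valid) cosmetic difference is that you evaluate $\sum_k S_k=2(n+1)^{n-2}$ via Theorem~\ref{thm:m-n} with $m=n-1$, whereas the paper invokes Abel's binomial identity $A_{n-1}(1,1;-1,-1)=2(n+1)^{n-2}$.
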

\begin{theorem}\label{mean}
Take $n$ large. For preference vector $\alpha=(a_1,\ldots,a_n) \in [n]^n$ chosen uniformly at random, we have
\begin{equation}
\mathbb{E}(a_n | \alpha \in \PF{n})=\frac{n+1}{2}-(2p-1)\Big[\frac{\sqrt{2\pi}}{4}n^{1/2}-\frac{7}{6}\Big]+o(1).
\end{equation}
\end{theorem}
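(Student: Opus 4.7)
The plan is to derive the asymptotic directly from Theorem \ref{component}. Computing $\mathbb{E}(a_n \mid \alpha \in \PF{n}) = \sum_{j=1}^n j\,\PP(a_n = j \mid \alpha \in \PF{n})$ and swapping the orders of the double sum, with $f(s) := \binom{n-1}{s}(s+1)^{s-1}(n-s)^{n-s-2}$, I would use the two convolution identities $\sum_s f(s) = 2(n+1)^{n-2}$ and $\sum_s s\,f(s) = (n-1)(n+1)^{n-2}$ (both of which follow from Lagrange inversion applied to $H(x) := T(x)/x$, where $T = xe^T$ is the tree function) to collapse the $p$-independent pieces. What remains is the exact identity
\[\mathbb{E}(a_n\mid\alpha\in\PF{n}) = \frac{n+1}{2} - \frac{(2p-1)\,K(n)}{2(n+1)^{n-1}}, \qquad K(n) := \sum_{s=0}^{n-1} s(n-s)\,f(s),\]
which is already sharp at $p = 1/2$. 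The remaining task is an asymptotic expansion of $K(n)/(n+1)^{n-1}$.

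The technical heart of the argument is a closed form for $K(n)$. Using $s\binom{n-1}{s} = (n-1)\binom{n-2}{s-1}$ and shifting the summation index, $K(n)$ becomes a convolution that I would identify as $(n-1)!\,[x^{n-2}]\,H'(x)\cdot H(x)/(1-T(x)) = (n-1)!\,[x^{n+1}]\,T^3/(1-T)^2$, where the second equality uses the functional identity $H' = H^2/(1-T)$. Applying Lagrange inversion to $\phi(T) = T^3/(1-T)^2$ with $\phi'(T) = T^2(3-T)/(1-T)^3$ and expanding $(1-T)^{-3}$ binomially, the coefficient of $(n+1)^j/j!$ simplifies via the algebraic identity $3\binom{n-j}{2} - \binom{n-1-j}{2} = (n-j)^2 - 1$, and the boundary contributions at $j = n-2$ and $j = n-3$ cancel exactly, collapsing the sum to
\[K(n) = (n-1)!\sum_{r=0}^{n-2}\frac{(n+1)^r}{r!}.\]
This identity is the main obstacle of the proof; an Abel-identity or bijective derivation may be cleaner, but the Lagrange-inversion route is certainly workable.

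For the asymptotics, I would recognize $K(n) = (n-1)!\,e^{n+1}\,\PP(X \le n-2)$ with $X \sim \mathrm{Poisson}(n+1)$. Stirling yields $(n-1)!\,e^{n+1}/(n+1)^{n-1} = \sqrt{2\pi n}\bigl(1+O(1/n)\bigr)$, while Temme's uniform asymptotic expansion of the regularized incomplete gamma function $Q(a,x)$ near $x = a$ (equivalently an Edgeworth expansion with continuity correction together with the standardized third-cumulant term $1/\sqrt{n+1}$) gives $\PP(X \le n-2) = 1/2 - 7/(3\sqrt{2\pi n}) + o(1/\sqrt n)$; here $-7/3$ decomposes as $-2$ from the erfc/continuity-correction piece and $-1/3$ from the Temme/third-cumulant correction. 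Multiplying the two factors gives $K(n)/(n+1)^{n-1} = (\sqrt{2\pi}/2)\,n^{1/2} - 7/3 + o(1)$, and plugging this into the displayed reduction above yields the claimed asymptotic. The subtle point is that the constant $-7/6$ in the theorem statement is a genuine interaction between the $O(1)$ Stirling correction to $(n-1)!\,e^{n+1}/(n+1)^{n-1}$ and the subleading Edgeworth correction to the Poisson tail, so both corrections must be retained throughout the expansion.
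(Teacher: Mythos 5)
Your proposal is correct, and its skeleton matches the paper's: swap the order of summation, evaluate the resulting sums in closed form, and then apply Stirling plus a continuity-corrected Edgeworth expansion of the Poisson($n+1$) tail $\PP(X\le n-2)=\tfrac12-\tfrac{7}{3\sqrt{2\pi n}}+o(1/\sqrt n)$. Where you genuinely diverge is in the exact evaluation. The paper expresses the swapped sum as a linear combination of four Abel sums $A_{n-1}(1,1;\cdot,\cdot)$ and invokes the special instances \eqref{1}--\eqref{4} of Theorem \ref{Abel}, arriving at the same exact identity $\mathbb{E}(a_n\mid\alpha\in\PF{n})=\tfrac{n+1}{2}-(2p-1)\tfrac{(n-1)!}{2(n+1)^{n-1}}\sum_{s=0}^{n-2}(n+1)^s/s!$ that you package as $K(n)=\sum_s s(n-s)f(s)=(n-1)!\sum_{r=0}^{n-2}(n+1)^r/r!$. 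Your reduction to the single sum $K(n)$ is a cleaner bookkeeping (it makes the exactness at $p=1/2$ immediate), and your Lagrange-inversion derivation via $K(n)=(n-1)!\,[x^{n+1}]T^3/(1-T)^2$ checks out (I verified it for small $n$, and the algebraic simplification $3\binom{n-j}{2}-\binom{n-1-j}{2}=(n-j)^2-1$ is right); but note that $K(n)$ is literally $A_{n-1}(1,1;0,0)-A_{n-1}(1,1;-1,0)$ via $s=(s+1)-1$, so the Abel route the paper takes gets you the closed form in one line from identities \eqref{2} and \eqref{4}, which is the "cleaner derivation" you suspected exists. One small correction to your closing remark: the constant $-7/6$ is \emph{not} an interaction between the Stirling correction and the Edgeworth correction. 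Since $(n-1)!\,e^{n+1}/(n+1)^{n-1}=\sqrt{2\pi n}\,(1+O(1/n))$, the relative $O(1/n)$ Stirling correction contributes only $O(1/\sqrt n)=o(1)$ to the product $K(n)/(n+1)^{n-1}$; the entire constant $-7/3$ comes from $\sqrt{2\pi n}$ times the $-7/(3\sqrt{2\pi n})$ term of the Poisson tail. This does not affect your final answer, which is the correct asymptotic.
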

Based on these results, in Section \ref{sec:rate}, we derive the rate of convergence of the distribution of $a_n$ to uniform distribution on $[n]$ in total variation ($\TV$) distance. In the classical parking model (corresponding to $p=1$), this rate of convergence was shown to be upper bounded by $O(1/\sqrt{n})$ in \cite{bellin2021}. We significantly extend this result and find the exact rate of convergence for any $p \in [0,1]$. 
Specifically, we show that for generic $p \neq 1/2$ the convergence rate is $\Theta(1/\sqrt{n})$, while for $p=1/2$ the convergence rate is $\Theta(1/n)$. 
Note that a consequence of Theorem \ref{component} is that the conditional distribution of $a_n$ for a generic $p$ can be written as a convex combination of the cases $p=0$ and $p=1$. 
This convexity argument will be central to the proof of Theorem \ref{thm:other}.
\begin{theorem}\label{thm:conv12} Let $Q_{n,p}(\cdot)$ be $\PP(a_n = \cdot \;|\; \alpha \in \PF{n})$ under the probabilistic parking protocol with parameter $p$. For $p=1/2$, the following bound holds
    \begin{equation}
    \| Q_{n,1/2}- \unif{n} \|_{\TV} = \Theta\left(\frac{1}{n}\right).
    \end{equation}
\end{theorem}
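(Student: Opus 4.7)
The plan is to derive a clean closed-form expression for $Q_{n,1/2}$ from Theorem \ref{component} and then estimate the total variation distance directly. When $p = 1/2$ is substituted into Theorem \ref{component}, the two index sets $\{0, \ldots, n-j-1\}$ and $\{n-j+1, \ldots, n-1\}$ together cover $\{0, 1, \ldots, n-1\}$ with precisely the single value $s = n-j$ omitted. Writing $f(s) := \binom{n-1}{s}(n-s)^{n-s-2}(s+1)^{s-1}$ and $S := \sum_{s=0}^{n-1} f(s)$, this yields
\[
Q_{n,1/2}(j) = \frac{2}{n+1} - \frac{S - f(n-j)}{2(n+1)^{n-1}}.
\]
Summing this identity over $j$ and using $\sum_{j=1}^n Q_{n,1/2}(j) = 1$ solves for $S = 2(n+1)^{n-2}$, yielding the compact formula
\[
Q_{n,1/2}(j) = \frac{1}{n+1} + \frac{g(j)}{2(n+1)^{n-1}}, \qquad g(j) := \binom{n-1}{j-1} j^{j-2}(n+1-j)^{n-j-1}.
\]

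Next, I set $c_j := n g(j)/(2(n+1)^{n-2})$. Because $\sum_{j=1}^n g(j) = S$, we have $\sum_{j=1}^n c_j = n$, and a short computation gives
\[
Q_{n,1/2}(j) - \frac{1}{n} = \frac{c_j - 1}{n(n+1)},
\]
so that
\[
\|Q_{n,1/2} - \unif{n}\|_{\TV} = \frac{1}{2n(n+1)}\sum_{j=1}^n |c_j - 1|.
\]
The upper bound is then immediate: by the triangle inequality and $\sum c_j = n$, we have $\sum |c_j - 1| \leq 2n$, so $\|Q_{n,1/2} - \unif{n}\|_{\TV} \leq 1/(n+1) = O(1/n)$.

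For the matching $\Omega(1/n)$ lower bound, I would use Stirling's formula to show that $c_j$ is uniformly $O(n^{-1/2})$ for $j$ in the bulk, say $j = \lfloor \tau n \rfloor$ with $\tau \in (\varepsilon, 1-\varepsilon)$. A straightforward calculation factoring $g(j)$ into its three components and comparing with $(n+1)^{n-2} \sim e\, n^{n-2}$ gives $c_j \sim C(\tau)/\sqrt{n}$ with $C(\tau)$ uniformly bounded on compact subsets of $(0,1)$. Consequently $|c_j - 1| \geq 1/2$ for at least $(1-2\varepsilon)n$ indices $j$ once $n$ is large, yielding $\sum_{j}|c_j - 1| \geq (1-2\varepsilon)n/2$ and hence the $\Omega(1/n)$ lower bound. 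The main technical step is making the Stirling approximation uniform over the bulk rather than pointwise; the factored form of $g(j)$ reduces this to standard local-central-limit-type estimates on each factor, but care is needed to ensure the error terms are controlled uniformly in $\tau$.
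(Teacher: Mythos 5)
Your derivation of the closed form $Q_{n,1/2}(j) = \frac{1}{n+1} + \frac{g(j)}{2(n+1)^{n-1}}$ and your upper bound are correct and essentially identical to the paper's: the paper also completes the two sums to the full Abel sum $A_{n-1}(1,1;-1,-1)=2(n+1)^{n-2}$ (your normalization trick $\sum_j Q_{n,1/2}(j)=1$ is a valid and self-contained way to evaluate $S$, though note that the constant $\tfrac{2}{n+1}$ in Theorem \ref{component} already encodes that same Abel identity), and then bounds $\sum_j|c_j-1|\le \sum_j(c_j+1)=2n$ via the triangle inequality to get $\|Q_{n,1/2}-\unif{n}\|_{\TV}\le \frac{1}{n+1}$. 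Where you genuinely diverge is the lower bound. You propose to show $c_j=\Theta(n^{-1/2})$ uniformly in the bulk $j=\tau n$, $\tau\in(\varepsilon,1-\varepsilon)$, so that $|c_j-1|\ge 1/2$ for a positive fraction of indices; this is true (one finds $c_j\sim \bigl(2\sqrt{2\pi n}\,(\tau(1-\tau))^{3/2}\bigr)^{-1}$, uniformly on compacts of $(0,1)$), and it would complete the proof, but it requires the uniform Stirling/local-limit work you acknowledge and currently only sketch. The paper's lower bound is far lighter: it simply retains the single term $j=1$ (equivalently $s=n-1$) of the TV sum, where $g(1)=n^{n-2}$ makes $c_1\sim n/(2e)$ enormous rather than small, giving $\|Q_{n,1/2}-\unif{n}\|_{\TV}\ge \frac{n^{n-2}}{4(n+1)^{n-1}}-\frac{1}{n(n+1)}\sim \frac{1}{4e}\cdot\frac{1}{n}$ with no uniformity issues. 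In short, your argument exploits the smallness of $Q_{n,1/2}(j)-\frac1n$ at almost every $j$, while the paper exploits its largeness at a single boundary point; both yield $\Omega(1/n)$, but the single-term bound is the one-line route and spares you the uniform asymptotics. I would recommend either adopting that shortcut or fully writing out the uniform estimate before considering your lower bound complete.
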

\begin{theorem}\label{thm:other} Let $Q_{n,p}(\cdot)$ be $\PP(a_n = \cdot \;|\; \alpha \in \PF{n})$ under the probabilistic parking protocol with parameter $p$. For $p\neq 1/2$, we have that 
    \begin{equation}
    \| Q_{n,p}- \unif{n} \|_{\TV} = \Theta\left(\frac{1}{\sqrt{n}}\right).
    \end{equation}
\end{theorem}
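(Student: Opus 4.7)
The plan is to exploit the fact, apparent from Theorem \ref{component}, that the conditional distribution $Q_{n,p}$ is an affine function of $p$, and then to separately establish the matching upper and lower bounds using the two extreme cases $p=0,1$ and the explicit asymptotic for the mean in Theorem \ref{mean}.

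Inspecting the formula in Theorem \ref{component}, the coefficients of $p$ and $1-p$ inside the bracket are exactly what one obtains by specializing to $p=1$ and $p=0$, so that, for all $j\in[n]$,
\[ Q_{n,p}(j) = p\,Q_{n,1}(j) + (1-p)\,Q_{n,0}(j). \]
Subtracting $\unif{n} = p\,\unif{n} + (1-p)\,\unif{n}$ and applying the triangle inequality in total variation gives
\[ \|Q_{n,p} - \unif{n}\|_{\TV} \;\leq\; p\,\|Q_{n,1} - \unif{n}\|_{\TV} + (1-p)\,\|Q_{n,0} - \unif{n}\|_{\TV}. \]
For the classical case $p=1$, Bellin et al.\ \cite{bellin2021} establish $\|Q_{n,1} - \unif{n}\|_{\TV} = O(1/\sqrt{n})$. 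For $p=0$, observe that the summand $f(s):=\binom{n-1}{s}(n-s)^{n-s-2}(s+1)^{s-1}$ is symmetric under $s\mapsto n-1-s$; reindexing the sum in Theorem \ref{component} for $p=0$ then yields the pointwise identity $Q_{n,0}(j) = Q_{n,1}(n+1-j)$, whence $\|Q_{n,0} - \unif{n}\|_{\TV} = \|Q_{n,1} - \unif{n}\|_{\TV}$ by reflection invariance of $\unif{n}$. This delivers the $O(1/\sqrt{n})$ upper bound.

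For the matching lower bound I would use the standard duality $\|\mu-\nu\|_{\TV} \geq |\mathbb{E}_\mu f - \mathbb{E}_\nu f|/(2\|f\|_\infty)$ with the test function $f(j)=j$, which is bounded by $n$ on $[n]$. Since $\mathbb{E}_{\unif{n}}[a_n] = (n+1)/2$, Theorem \ref{mean} gives
\[ \|Q_{n,p} - \unif{n}\|_{\TV} \;\geq\; \frac{|2p-1|\cdot\tfrac{\sqrt{2\pi}}{4}\sqrt{n} + O(1)}{2n} \;=\; \Omega\!\left(\frac{|2p-1|}{\sqrt{n}}\right), \]
which is $\Omega(1/\sqrt{n})$ for any fixed $p\neq 1/2$. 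The main obstacle lies upstream, in Theorem \ref{mean}: the lower bound is driven entirely by the $\sqrt{n}$ correction to the mean, whose explicit constant $\tfrac{\sqrt{2\pi}}{4}$ must come from careful asymptotic analysis of the sums in Theorem \ref{component}. Once Theorem \ref{mean} and the convex-combination identity are in place, the remainder of the argument amounts to one application of the triangle inequality together with the TV-expectation duality; this also explains why the $p=1/2$ case collapses to a smaller rate and requires the separate treatment of Theorem \ref{thm:conv12}.
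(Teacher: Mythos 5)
Your proposal is correct and follows essentially the same route as the paper: the upper bound comes from writing $Q_{n,p}=pQ_{n,1}+(1-p)Q_{n,0}$, the reflection symmetry $Q_{n,0}(j)=Q_{n,1}(n+1-j)$, and Bellin's $O(1/\sqrt{n})$ bound for the classical case, while the lower bound uses the dual characterization of total variation with the linear test function together with the $\sqrt{n}$ mean shift from Theorem \ref{mean}. The only cosmetic difference is that the paper first proves the two-sided comparison $|2p-1|\,\|Q_{n,1}-\unif{n}\|_{\TV}\le\|Q_{n,p}-\unif{n}\|_{\TV}\le\|Q_{n,1}-\unif{n}\|_{\TV}$ (Proposition \ref{prop:dTVp}) and applies the duality at $p=1$, whereas you apply the duality directly to $Q_{n,p}$; both yield the same constant $|2p-1|\sqrt{2\pi}/8$.
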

The much faster convergence to uniform when $p=1/2$ is reflected in our simulations. Figure \ref{distribution} shows a histogram of the values of $a_n$ based on $100,000$ random samples for $n=100$, with varying probabilistic parameter $p$.
\begin{figure}
\centering
\subfigure{\includegraphics[width=3in]{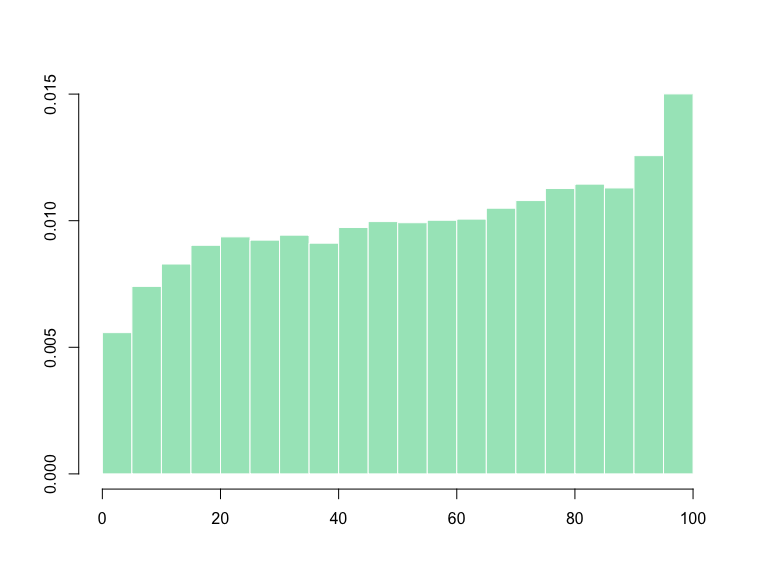}}
\hfill
\subfigure{\includegraphics[width=3in]{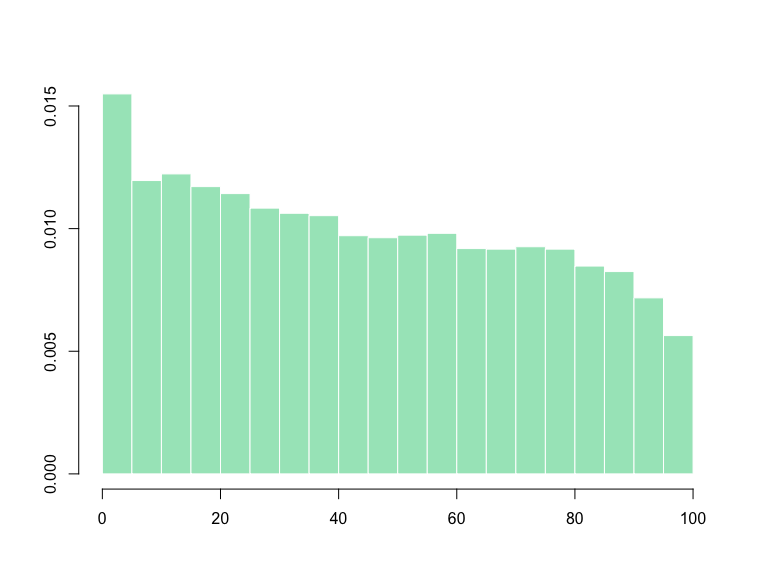}}
\vskip\baselineskip
\subfigure{\includegraphics[width=3in]{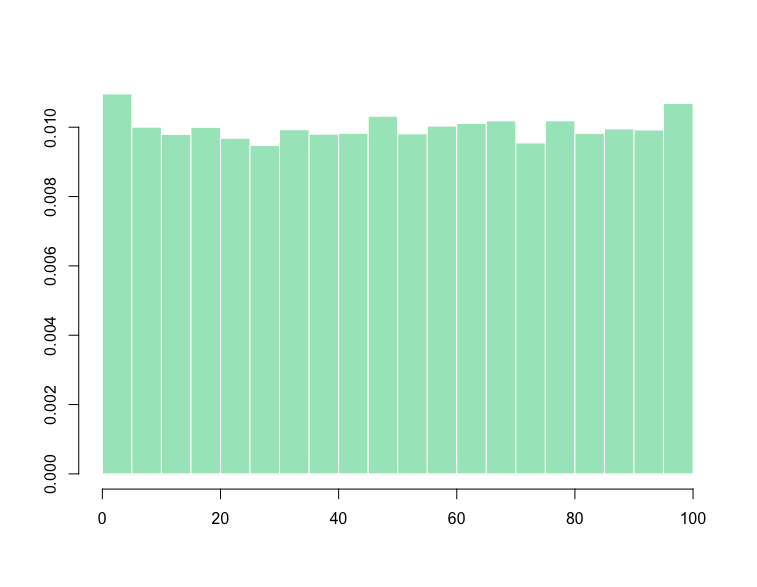}}
\vskip\baselineskip
\subfigure{\includegraphics[width=3in]{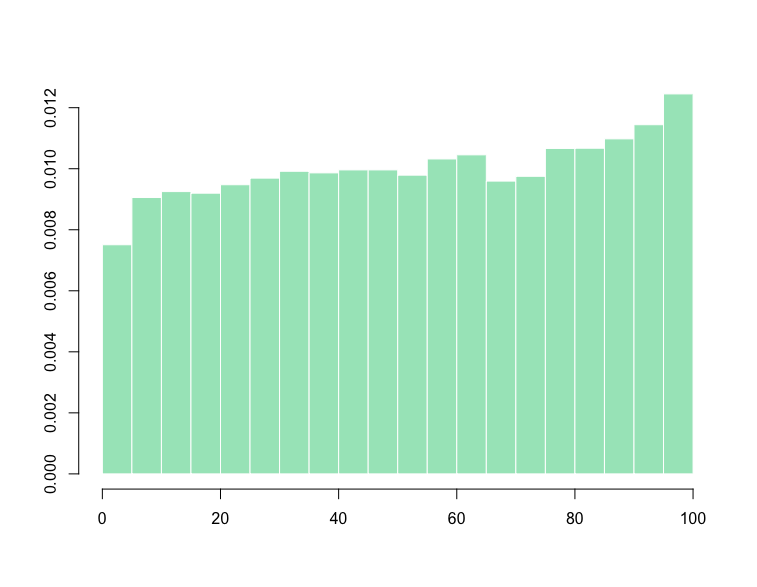}}
\hfill
\subfigure{\includegraphics[width=3in]{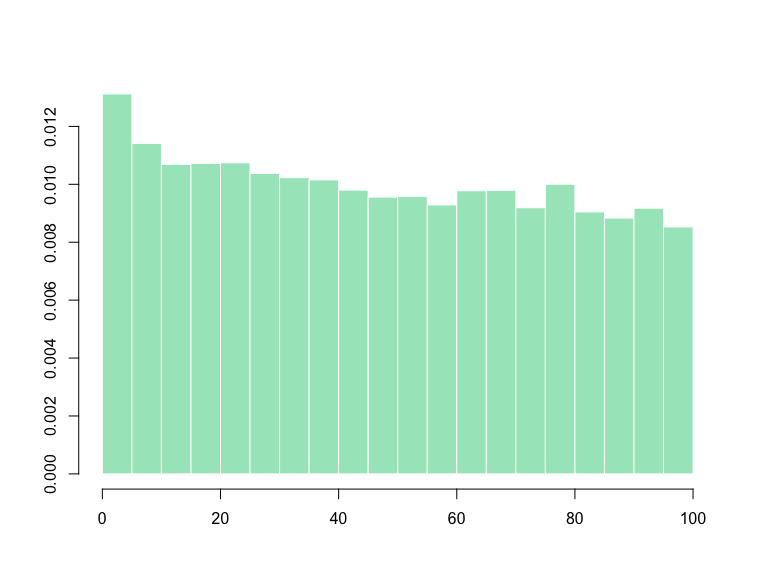}}
\caption{
The conditional distribution of $a_n$ (parking preference of the last car) in $100,000$ samples of preference vectors of size $100$ chosen uniformly at random. The upper left plot is for $p=0$ and the upper right plot is for $p=1$. The middle plot is for $p=0.5$. The lower left plot is for $p=0.25$ and the lower right plot is for $p=0.75$. Note the preference symmetry between $p$ and $1-p$ as observed earlier in the introduction.}
\label{distribution}
\end{figure}
We give some intuitive explanations for this interesting phenomenon. As $p$ increases from $0$ to $1/2$, the distribution of $a_n$ places less and less mass on cars with large parking preferences, as cars are gradually losing their backwards moving bias when their desired spot is taken. 
Similarly, as $p$ decreases from $1$ to $1/2$, the distribution of $a_n$ places less and less mass on cars with small parking preferences, as cars are gradually losing their forward moving bias when their desired spot is taken. 
For $p=1/2$, the movement bias of a car when the desired spot is taken is gone because going left and right have the same probability, so the distribution of $a_n$ is much closer to uniform distribution.

In Section~\ref{section:relatedresults}, we then present several combinatorial consequences of the probabilistic parking protocol. In particular, we relate this to the notion of lucky cars in Section~\ref{subsections:lucky}, inspired by Gessel and Seo \cite[Page 1]{Gessel}, and prove the following.

\begin{theorem}\label{thm:sumsofprods}
Suppose $n$ cars park on a circle with spots $[n+1]$. The expected number of preference vectors in $[n+1]^n$ containing $0 \le k < n$ unlucky cars is
\begin{eqnarray}\label{eqn:sumsofprods}
U_n(k) = n! \sum\limits_{C_k \vdash \{2,\dots,n\}} \prod\limits_{c \in C_k} \frac{i-1}{(n+1)-(i-1)},
\end{eqnarray}
\noindent where $C_k$ is a size-$k$ subset of the cars $\{2,\dots,n\}$, and each car $i \in C_k$ is unable to park at its preferred spot.
\end{theorem}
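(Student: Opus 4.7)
The strategy is to show that on a circle of $n+1$ spots, the events $E_i = \{\text{car } i \text{ is unlucky}\}$ for $i = 2, \dots, n$ are mutually independent with $\PP(E_i) = (i-1)/(n+1)$, after which the stated formula reduces to a direct enumeration.

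First I would verify that on a circle of $n+1$ spots with $n$ cars, every car finds a spot regardless of the coin outcomes: at each step there is at least one unoccupied spot, so the searching car (whether moving clockwise or counterclockwise) eventually discovers it. In particular, immediately before car $i$ arrives, exactly $i-1$ spots are occupied.

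Next, I would condition on the history $H_{i-1}$ consisting of the preferences $a_1, \dots, a_{i-1}$ and all coin flips of the first $i-1$ cars. Car $i$ is unlucky precisely when its preferred spot $a_i$ lies in the set $O_{i-1}$ of occupied spots. Since $a_i$ is drawn uniformly from $[n+1]$ and is independent of $H_{i-1}$,
\[
\PP(E_i \mid H_{i-1}) = \frac{|O_{i-1}|}{n+1} = \frac{i-1}{n+1},
\]
which does not depend on $H_{i-1}$. By the chain rule, $E_2, \dots, E_n$ are mutually independent, and for every $S \subseteq \{2, \dots, n\}$,
\[
\PP\bigl(\{i : E_i \text{ holds}\} = S\bigr) = \prod_{c \in S} \frac{c-1}{n+1} \prod_{c \in \{2,\dots,n\} \setminus S} \frac{n+2-c}{n+1}.
\]
Multiplying by $(n+1)^n$, the number of preference vectors over which the expectation ranges, summing over all subsets $S$ of size $k$, and applying the identity $\prod_{i=2}^{n}(n+2-i) = n!$ to repackage the complementary factors, would then produce the announced formula.

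The main subtlety lies in the independence step, where one must verify that the conditional unlucky probability depends only on $|O_{i-1}|$ rather than on the identity of $O_{i-1}$; this collapses the dependence on history precisely because $|O_{i-1}| = i-1$ is deterministic, a feature of having one more spot than cars. A pleasant corollary of this plan is that the coin parameter $p$ never appears in $U_n(k)$, since the coin merely redistributes \emph{which} spots become occupied, never \emph{how many}.
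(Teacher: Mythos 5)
Your core argument is sound and is essentially the paper's argument in probabilistic dress: both proofs rest on the observation that with $n+1$ circular spots for $n$ cars every car parks, so the number of occupied spots seen by car $i$ is deterministically $i-1$, and car $i$ is unlucky exactly when its uniform, independent preference lands among those $i-1$ spots. The paper turns this directly into a product count ($n+2-i$ lucky choices versus $i-1$ unlucky choices per car), while you make the multiplicativity explicit as mutual independence of the events $E_2,\dots,E_n$ via the chain rule; that is a cleaner justification of the same step, and your closing remark about why $p$ never enters is exactly right.

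The one concrete problem is the final normalization. The theorem is stated under the convention, set up in the paragraph just before it, that the leading preference is fixed at spot $1$ (the first car is always lucky), so the expectation ranges over the $(n+1)^{n-1}$ preference vectors with $a_1=1$; this is confirmed by $U_n(0)=n!$ rather than $(n+1)!$, and by the row sums of Table~\ref{tab:triangular}, which equal $(n+1)^{n-1}$. Your probability $\prod_{c\in S}\frac{c-1}{n+1}\prod_{c\in\{2,\dots,n\}\setminus S}\frac{n+2-c}{n+1}$ has denominator $(n+1)^{n-1}$, so multiplying it by $(n+1)^{n}$ produces $(n+1)\,n!\sum_{C_k}\prod_{i\in C_k}\frac{i-1}{(n+1)-(i-1)}=(n+1)\,U_n(k)$, an overcount by a factor of $n+1$. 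Replacing the multiplier $(n+1)^n$ by $(n+1)^{n-1}$ (equivalently, conditioning on $a_1=1$ throughout, which changes nothing in your independence computation since it only concerns cars $2,\dots,n$) yields exactly the stated formula, and with that correction your proof is complete.
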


Then, in Section~\ref{subsection:oeis}, we further explore Theorem~\ref{thm:sumsofprods} and present an interesting connection to an open problem proposed by Novelli and Thibon \cite{novelli2020duplicial} involving the integer sequence OEIS \href{https://oeis.org/A220884}{A220884}, summarized by the following.

\begin{theorem}\label{thm:oeistriangle}
Let $Q_n(q)$ be the generating polynomial $\prod_{k=2}^n [(n+1-k)q + k]$. Then
\begin{equation}
Q_n(q) = \sum_{k=0}^{n-1} U_n(k) q^k.
\end{equation}
\end{theorem}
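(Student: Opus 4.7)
The plan is to prove Theorem \ref{thm:oeistriangle} by directly expanding the product form of $Q_n(q)$ and matching the resulting coefficients against the subset-sum formula given by Theorem \ref{thm:sumsofprods}. First I will factor a constant out of each linear factor, writing
\[
Q_n(q) = \prod_{k=2}^n \bigl[(n+1-k)q+k\bigr] = n!\,\prod_{k=2}^n \Bigl[1 + \tfrac{n+1-k}{k}\,q\Bigr],
\]
using that $\prod_{k=2}^n k = n!$. Expanding term by term (choose either the $1$ or the $q$ summand from each factor) yields, for $0 \le s \le n-1$,
\[
[q^s]\,Q_n(q) \;=\; n!\sum_{\substack{S \subseteq \{2,\ldots,n\}\\ |S|=s}}\; \prod_{k\in S}\frac{n+1-k}{k}.
\]

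Next I want to match this with the expression from Theorem \ref{thm:sumsofprods},
\[
U_n(s) \;=\; n!\sum_{\substack{C_s \subseteq \{2,\ldots,n\}\\ |C_s|=s}}\; \prod_{i\in C_s}\frac{i-1}{(n+1)-(i-1)} \;=\; n!\sum_{\substack{C_s \subseteq \{2,\ldots,n\}\\ |C_s|=s}}\; \prod_{i\in C_s}\frac{i-1}{n+2-i}.
\]
The key step is the reflection $\phi\colon k \mapsto n+2-k$, which is an involution on $\{2,\ldots,n\}$ and hence induces a bijection between size-$s$ subsets. Under this bijection, each $k \in S$ is sent to $i = n+2-k \in C_s$, and a direct computation gives
\[
\frac{i-1}{n+2-i} \;=\; \frac{(n+2-k)-1}{n+2-(n+2-k)} \;=\; \frac{n+1-k}{k},
\]
so the summands agree term by term. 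Therefore $[q^s]\,Q_n(q) = U_n(s)$ for every $s$, which proves the identity $Q_n(q) = \sum_{k=0}^{n-1} U_n(k)\, q^k$.

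Since the claim reduces to an elementary expansion, there is no deep obstacle; the only substantive step is recognizing the involution $k \mapsto n+2-k$ that reconciles the factor index $k \in \{2,\ldots,n\}$ appearing in $Q_n$ with the unlucky-car index $i \in \{2,\ldots,n\}$ appearing in $U_n$. One could alternatively try an induction by peeling the factor $[(n-1)q+2]$ (corresponding to $k=2$) off $Q_n$, but this would require first deriving a matching recursion for $U_n(k)$, and the bookkeeping appears less transparent than the direct expansion outlined above. As a sanity check, the extreme coefficients $U_n(0)=n!$ and $U_n(n-1)=(n-1)!$ both agree with the constant and leading terms of $Q_n(q)$, which confirms that the bijection is calibrated correctly.
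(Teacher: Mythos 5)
Your proof is correct, and it rests on the same essential observation as the paper's: the reflection $k \mapsto n+2-k$ of the index set $\{2,\ldots,n\}$, which turns the factor $(n+1-k)q+k$ into $(i-1)q + \bigl[(n+1)-(i-1)\bigr]$. The difference is in how that observation is deployed. You expand $Q_n(q)$ algebraically, read off $[q^s]\,Q_n(q)$ as a subset sum, and match it term by term against the closed formula for $U_n(s)$ from Theorem~\ref{thm:sumsofprods}; this is a clean, self-contained coefficient comparison that uses only the \emph{statement} of Theorem~\ref{thm:sumsofprods}. The paper instead reindexes the product first, writing $\prod_{k=2}^n [(n+1-k)q+k] = \prod_{k=2}^{n}\bigl[(k-1)q + (n+1)-(k-1)\bigr]$, and then argues combinatorially that the two coefficients in the $k$-th factor count the preferences making car $k$ unlucky and lucky respectively, so that expanding the product directly re-derives the expected counts from the parking model. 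The paper's route buys a little extra: truncating the product after $j$ factors gives the interpretation of partial preference vectors in $[n+1]^{j+1}$, which is what feeds the weighted-Pascal recurrence in the following subsection. Your route is arguably tighter as a proof of the stated identity, and your boundary checks $U_n(0)=n!$ and $U_n(n-1)=(n-1)!$ are both correct and confirm the calibration of the involution.
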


Finally, in Section~\ref{subsection:pascal}, we present an interesting connection between Pascal's triangle and a recurrence relation which counts the expected number of preference sequences with a particular property related to occupied parking spots. This recurrence relation has the form:
\begin{equation}
E_n(i,k)=((n+1)-(i-1))\cdot E_n(i-1,k)+(i-1)\cdot E_n(i-1,k-1),
\end{equation}
where $E_n(i,k)$ denotes the expected number of preference vectors in $[n+1]^i$
containing $k$ unlucky cars, for some $0 \le k < i \le n$. Note that Gessel and Seo~\cite{Gessel} define a car as lucky if it is able to park at its preferred spot.

This paper is organized as follows. In Section~\ref{sec:prob-intro}, we prove that the probabilistic parameter $p$ for the parking protocol involving a coin flip does not affect the probability that a preference vector $\alpha \in [n]^n$ is a parking function. 
We also explore the properties of $\alpha$ given that it is a parking function and discuss situations in which the effect of the probabilistic parameter $p$ comes up. 
Of special interest is when $p=1/2$, where we demonstrate a sharp transition in some parking statistics. In Section~\ref{section:relatedresults}, we present several interesting combinatorial consequences of the parking protocol. 
In particular, we derive an integer sequence which solves an open problem proposed by Novelli and Thibon 2020~\cite{novelli2020duplicial}. We conclude with Section~\ref{sec:futurework}, where we offer directions for future work.

\section{Introducing probability} \label{sec:prob-intro}

We begin by stating an important result credited to Henry O.~Pollak\footnote{While there is no official paper written by Pollak on the topic, he has been credited for the proof by multiple authors, including Foata and Riordan \cite{foata}.}. We emphasize its importance in motivating the proof of Theorem~\ref{theorem:pollakcoin}, but direct the interested reader to \cite{Kimberlyy,Kimberly} for a detailed proof of counting the number of parking functions of length $n$. 
\begin{theorem}\label{theorem:pollak1}
If $n\geq 1$, then $|\PF{n}|=(n+1)^{n-1}$.
\end{theorem}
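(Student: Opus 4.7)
The plan is to use Pollak's classical circular argument, which recasts the counting problem on a geometry where every car is guaranteed to park. First I would enlarge the street by adjoining one extra spot labeled $n+1$ and identifying the endpoints, so the $n$ cars now park on a one-way circle with spots $[n+1]$, drivers still advancing to the next available spot when their preferred one is taken. With $n$ cars and $n+1$ spots on the circle, every car successfully parks, and exactly one spot remains empty at the end.

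The key observation I would then establish is that a preference vector $\alpha \in [n]^n$ is a parking function on the original linear street if and only if, viewed as a vector in $[n+1]^n$ and run through the circular protocol, the spot left unoccupied is $n+1$. Indeed, if spot $n+1$ is the empty one, then no car ever had to ``wrap around'' past $n+1$, so the same parking outcome would have occurred on the linear street; conversely, if all cars park on the linear street with $n$ spots, none of them use the extra spot $n+1$, so $n+1$ is the empty spot on the circle.

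The crux of the argument is a symmetry on the circle. For each $k \in \{0, 1, \ldots, n\}$, define the shift map $\sigma_k : [n+1]^n \to [n+1]^n$ by $\sigma_k(a_1,\ldots,a_n) = (a_1 + k, \ldots, a_n + k)$ with entries reduced modulo $n+1$ into the representatives $[n+1]$. This map is a bijection, and by rotational invariance of the circular parking protocol, it shifts the empty spot by $k$ as well. Hence the $(n+1)^n$ preference vectors in $[n+1]^n$ are partitioned into $n+1$ equinumerous classes according to which spot is left empty, and exactly $(n+1)^n/(n+1) = (n+1)^{n-1}$ of them leave spot $n+1$ empty. By the bijection of the previous paragraph, this is precisely $|\PF{n}|$.

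There is no serious obstacle here; the only step requiring care is justifying that $\sigma_k$ shifts the empty spot by exactly $k$, which follows because the parking dynamics depend only on the cyclic differences between preferences and positions. Everything else is bookkeeping. I would note that although the statement is about the classical (deterministic) parking protocol, the same structural argument underlies Theorem~\ref{theorem:pollakcoin}, since in the linear case with all $n$ cars successfully parking, no car ever needs to consult the coin.
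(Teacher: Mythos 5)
Your proof is correct: it is precisely Pollak's classical circular-shift argument, which the paper itself does not reproduce for this theorem (it defers to the cited references) but which is exactly the template the paper adapts in Section~2.2 — your ``empty spot is $n+1$'' equivalence is the paper's Lemma~\ref{lemma:does-not-contain}, and your shift map $\sigma_k$ is the paper's Lemma~\ref{lemma:pmf-modular} specialized to the deterministic case. No substantive differences to report.
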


\subsection{The Coin Problem} \label{subsec:coin-prob}

We now introduce the Coin Problem, which involves a new parking protocol, motivated by Carlson et al.~in \cite{carlson2020parking}. We have $n$ cars in a queue to enter a one-way street with $n$ parking spots numbered from 1 to $n$. Let $a_i$ denote the preference of car $i$, and let $\alpha=(a_1,a_2,\ldots,a_n) \in [n]^n$ denote the vector of all $n$ cars' parking preferences. In the Coin Problem, each parking preference $a_i \in [n]$ is chosen uniformly at random, i.e., with probability $1/n$.

As in the classical parking process, each car enters the street from their preferred spot and directly parks there if possible. However, if a car's preferred spot is occupied, that driver tosses a weighted coin with probability $p$ of landing heads. If the coin toss is heads, the driver proceeds to the end of the street, either parking at the next available spot, or exiting the street if none are available. But, if the coin toss is tails, the driver proceeds in reverse, toward the start of the street. Notice that $p = 1$ in the classical process, so it may be viewed as a special case of the Coin Problem. We henceforth refer to $p$ as the ``forward probability."

Then, given this new parking protocol, for a one-way street with $n$ spots, what is the probability that a preference vector $\alpha \in \PF{n}$, chosen uniformly at random, allows all $n$ cars to park on the street? Recall that under the classical parking protocol, such an answer is easy to obtain via the pigeonhole principle: $\alpha \in [n]^n$ allows all $n$ cars to park if and only if
\begin{equation}\label{pigeon}
\#\{k: a_k \leq i\} \geq i, \hspace{.2cm} \mbox{ for all } i=1, \dots, n,
\end{equation}
and so in particular the probability of a preference vector being a parking function is invariant under the action of the symmetric group $\Sym_n$ by permuting cars. This nice and simple permutation symmetry however breaks under the probabilistic parking protocol, adding more delicacy to the answer, as we explain in the example below.
\begin{example}[Lack of permutation symmetry under the probabilistic parking protocol]\label{ex:3cars}
The preference vector $\alpha= (1,2,2) \in [3]^3$ is a parking function, or {\it parks}, with probability $p$. Cars numbered $1$ and $2$ will directly park at their preferred spot, whereas car $3$ can only park at spot $3$ as it is the only remaining available spot. This can happen only when the biased coin flips to heads, which occurs with probability $p$. The reader might find it instructive to check Figure \ref{fig:scheme} for an illustration of the above key parking steps.
\begin{figure}[H]
    \centering
    \includegraphics[width=0.65\linewidth]{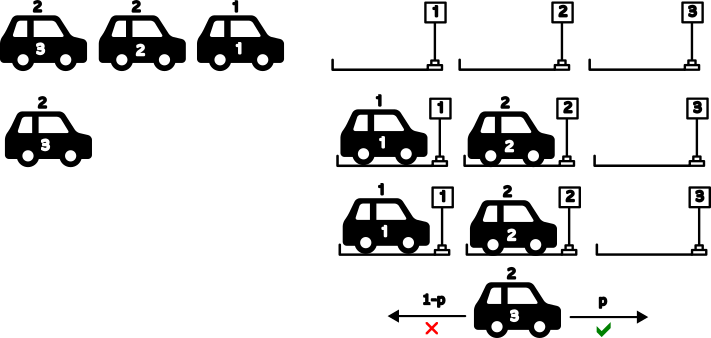}
    \caption{Parking scheme for the preference vector $(1,2,2)$. The number at the top of the car represents its preference. With probability $p$, all cars can park.}
    \label{fig:scheme}
\end{figure}
On the other hand, consider now the preference vector  $\alpha'=(2,2,1)$, which is a permutation of $\alpha=(1, 2, 2)$. Car $1$ will park at spot $2$ directly. But since the desired spot for car $2$ is now taken, instead of directly parking at spot $2$, car $2$ will have two choices. Either car $2$ moves forward to park at spot $3$, leaving spot $1$ open for car $3$, which happens to be car $3$'s desired spot. This parking situation happens with probability $p$. Or car $2$ moves backwards to park at spot $1$, which coincides with the desired spot for car $3$, forcing car $3$ to move forward to park when it enters the street. This parking situation happens with probability $(1-p)p$. Altogether, $\alpha'$ is a parking function with probability $p+(1-p)p$.
\end{example}
Given the preference vector $\alpha\in [3]^3$, Table \ref{table:probs} gives the probability that all three cars park under the probabilistic parking protocol. 
We note that each preference vector $\alpha \in [3]^3$ has probability $1/3^3=1/27$ of being chosen, and the sum of the probabilities in the table add up to $(3+1)^{3-1}=16$. This result is especially interesting because it is independent of $p$, and suggests that the probability that a random preference vector $\alpha \in \PF{n}$ is a parking function is identical to that in the classical process (see Theorem \ref{theorem:pollak1}).
\begin{table}[H]
\begin{tabular}{|c|c|c|c|c|c|c|c|c|}
\hline
 $\alpha \in [3]^3$ & $(1,1,1)$ & $(1,1,2)$  & $(1,1,3)$  & $(1,2,1)$   & $(1,2,2)$  & $(1,2,3)$ & $(1,3,1)$   \\ \hline
 $\PP(\alpha \in \PF{3} | \alpha)$ & $p^2$ & $p^2$ & $p$ & $p$ & $p$ & $1$  & $p$  \\ \hline
 $\alpha \in [3]^3$ & $(2,1,1)$ & $(2,1,2)$  & $(2,1,3)$  & $(2,2,1)$   & $(2,2,2)$  & $(2,2,3)$ & $(2,3,1)$  \\ \hline
 $\PP(\alpha \in \PF{3} | \alpha)$ & $p$ & $p$  & $1$ & $p+(1-p)p$  & $2p(1-p)$  & $p(1-p)+(1-p)$  & $1$  \\ \hline
 $\alpha \in [3]^3$ & $(3,1,1)$ & $(3,1,2)$  & $(3,1,3)$  & $(3,2,1)$   & $(3,2,2)$  & $(3,2,3)$ & $(3,3,1)$  \\ \hline
 $\PP(\alpha \in \PF{3} | \alpha)$ & $p$ & $1$ & $1-p$ & $1$ & $1-p$ & $1-p$ & $1-p$ \\ \hline
 $\alpha \in [3]^3$ & $(1,3,2)$ & $(1,3,3)$  & $(2,3,2)$  & $(2,3,3)$   & $(3,3,2)$  & $(3,3,3)$ &  \\ \hline
 $\PP(\alpha \in \PF{3} | \alpha)$ & $1$ & $1-p$ & $1-p$ & $1-p$ & $(1-p)^2$ & $(1-p)^2$ & \\ \hline
\end{tabular}
\caption{All preference vectors for $n=3$ and their probability of being a parking function.}
\label{table:probs}
\end{table}

\subsection{Invariance to the forward probability} \label{subsection:probabilityandparkingfunctions}

In this section, we generalize the results of Example~\ref{ex:3cars} and Table~\ref{table:probs}, proving that the probability that a random preference vector is a parking function is identical for all forward probabilities $p \in [0,1]$.

\begin{customthm}{1}\label{theorem:pollakcoin}
Consider the preference vector $\alpha \in [n]^n$ chosen uniformly at random. Then
\begin{eqnarray}\label{eqn:pollakcoin}
\PP(\alpha \in \PF{n} | \alpha \in [n]^n) = \frac{(n+1)^{n-1}}{n^n}.
\end{eqnarray}
\end{customthm}

In particular, despite the forward probability $p$, the probability that a random preference vector in $[n]^n$ is a parking function is identical to the classical case, in which cars may only search in the forward direction.

Similar to Pollak's proof for counting classical parking functions, as cited in Theorem \ref{theorem:pollak1}, we consider the scenario in which $n$ cars park on a circle with spots $[n+1]$ arranged clockwise, where each car may prefer any spot in $[n+1]$. Then, there are $(n+1)^{n}$ equally likely preference vectors. 
Our probabilistic parking protocol for one-way parking may be interpreted similarly in this circular parking situation. 
If a car arrives at its preferred spot and finds it unoccupied it parks there. If instead the spot is occupied, then the driver tosses the biased coin. If the coin lands on heads, with probability $p$, the driver continues moving clockwise in the street. 
However, if the coin lands on tails, with probability $1-p$, the car moves counterclockwise and tries to find an unoccupied parking spot.

For convenience, we define the following set.
\begin{definition}\label{def:bucket}
For all $i \in [n+1]$, let $S_i = \{\alpha \in [n+1]^n : \text{spot \(i\) is vacant after all \(n\) cars have parked}\}.$
\end{definition}


In the classical case, any preference vector in $S_{n+1}$ must also be a parking function. This is because the forward probability is $p = 1$, so cars may only search in the forward direction. Conversely, any preference vector which is a parking function must belong in $S_{n+1}$, because no car may be assigned to spot $(n+1)$. However, this relationship between $S_{n+1}$ and $\PF{n}$ is not limited to the classical case.

\begin{lemma}\label{lemma:does-not-contain}
Given arbitrary preference vector $\alpha \in [n+1]^n$, then $\alpha \in \PF{n}$ if and only if $\alpha \in S_{n+1}$.
\end{lemma}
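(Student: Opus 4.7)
The plan is to couple the linear parking protocol on spots $[n]$ with the circular protocol on spots $[n+1]$ by using the same preference vector $\alpha$ and the same sequence of coin flips in both processes, and to show that the two produce identical parking assignments precisely when spot $n+1$ is left vacant in the circular version.

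For the direction $\alpha \in S_{n+1} \Rightarrow \alpha \in \PF{n}$, I would first note that if spot $n+1$ is vacant at the end of the circular process, then it was empty throughout (a parked car never leaves). This has two consequences. First, no car's preference can equal $n+1$, since such a car would arrive to find spot $n+1$ empty and park there. Second, no car can wrap across the $n+1 \leftrightarrow 1$ boundary in either direction, since a forward-moving car reaching $n+1$, or a backward-moving car crossing past spot $1$, would find $n+1$ empty and park, contradicting $\alpha \in S_{n+1}$. Therefore each car's trajectory stays within $[n]$, so the circular parking coincides with the linear $[n]$ parking under the coupling, yielding $\alpha \in \PF{n}$.

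For the converse $\alpha \in \PF{n} \Rightarrow \alpha \in S_{n+1}$, I would argue by induction on the car index $c = 1, \ldots, n$, with the hypothesis that after car $c$ has parked, the occupied spots coincide in linear $[n]$ and circular $[n+1]$, and spot $n+1$ remains empty. The base case is immediate since $\alpha \in \PF{n}$ forces $a_i \in [n]$. For the inductive step, car $c+1$ sees identical occupancy at its preferred spot under both protocols and, if its spot is taken, flips the same coin to obtain the same direction. If it moves forward, the hypothesis $\alpha \in \PF{n}$ guarantees an empty spot in $\{a_{c+1}, \ldots, n\}$ in the linear process, and by the inductive hypothesis this is the first empty spot encountered in the circular traversal as well, reached before $n+1$; the backward case is symmetric, using $\{1, \ldots, a_{c+1}\}$ and the impossibility of wrapping past spot $1$. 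Hence car $c+1$ parks at the same spot in both processes and $n+1$ remains vacant.

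The main subtlety I expect is just the bookkeeping for the coupling: one must index coin flips unambiguously (say by (car, attempt)) and carefully verify that the occupancy invariant is preserved in every branch of the inductive step. Once that discipline is in place, the equivalence falls out cleanly from the structural observation that spot $n+1$ in the circular model acts as an absorbing empty buffer whose occupation is exactly what distinguishes the circular dynamics from the linear ones.
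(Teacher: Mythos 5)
Your proof is correct and takes essentially the same route as the paper's: both directions rest on the observation that spot $n+1$ remaining vacant forces the circular dynamics to coincide with the linear dynamics on $[n]$, so the two events agree realization by realization. You simply make explicit the coin-flip coupling and the no-wrapping induction that the paper's much terser argument leaves implicit.
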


\begin{proof}
In the forward direction, if $\alpha \in \PF{n}$, then none of the $n$ cars park in spot $(n+1)$, which implies that $\alpha \in S_{n+1}.$

In the backward direction, if $\alpha \in S_{n+1}$, then none of the $n$ cars prefer spot $(n+1)$. For contradiction, suppose some car $i$ preferred spot $(n+1)$. Then car $i$ either parks in spot $(n+1)$ directly, or it cannot, because that spot is already occupied. In either case, some car parks in spot $(n+1)$, and therefore $\alpha \not \in S_{n+1}$. That is, all $n$ cars prefers some spot from the original one-way street. And, because $\alpha \in S_{n+1}$, all cars are able to fit on the one-way street. Therefore, $\alpha \in \PF{n}$.
\end{proof}

Next, recall that in Table~\ref{table:probs}, we saw that
$p$ did not affect the final probability $\PP(\alpha \in \PF{3} | \alpha \in [3]^3)$.
This was surprising because $p$ appeared in the probabilities corresponding to
all cars parking on the street, as shown in Example~\ref{ex:3cars}. This motivates analysis via conditional probability.


\begin{definition}\label{def:pmf-coin}
Given an arbitrary preference vector $\alpha \in [n+1]^n$, let the conditional probability mass function $f_{\alpha}: [n+1] \rightarrow [0, 1]$ denote the probability that $\alpha \in S_i$. That is,
$f_\alpha(i) = \PP(\alpha \in S_i | \alpha)$.
\end{definition}

In Definition \ref{def:bucket}, $S_i$ denotes the event that all $n$ cars have parked on the circular street with $n+1$ spots, only leaving spot $i$ vacant. 
The conditional sample space for some $\alpha \in [n+1]^n$ is thus partitioned into the disjoint events $\{\alpha \in S_1, \alpha \in S_2, \dots, \alpha \in S_{n+1}\}$. 
This implies that $\sum_{i=1}^{n+1} f_\alpha(i)=1$ by Definition \ref{def:pmf-coin}.

\begin{lemma}\label{lemma:pmf-modular}
Consider any two preference vectors $\alpha, \alpha^* \in [n+1]^n$ satisfying $\alpha^* = \alpha + (k * {\bf 1})\ (\textrm{mod}\ (n+1))$, for some $k \in [n]$. Let $j = (i + k)\ (\textrm{mod}\ (n+1))$. Then $f_\pi(i) = f_{\pi^*}(j)$.
\end{lemma}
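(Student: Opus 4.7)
The plan is to exploit the rotational symmetry of the circular parking model via an explicit coupling. Intuitively, if every car shifts its preferred spot clockwise by $k$, and the parking protocol itself (flip a coin, then move clockwise on heads or counterclockwise on tails) is invariant under rotations of the circle $\mathbb{Z}/(n+1)\mathbb{Z}$, then the entire occupied/vacant configuration produced by $\alpha^*$ should be exactly the configuration produced by $\alpha$ rotated by $k$. So spot $i$ is vacant under $\alpha$ if and only if spot $j = (i+k)\bmod(n+1)$ is vacant under $\alpha^*$, and equality of probabilities will fall out.

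To make this precise, I would couple the two processes by using the \emph{same} sequence of coin flips in both. Let $O_t(\alpha)\subseteq[n+1]$ denote the (random) set of occupied spots after the first $t$ cars have parked under preference vector $\alpha$, and similarly $O_t(\alpha^*)$. I claim $O_t(\alpha^*) = O_t(\alpha) + k \pmod{n+1}$ for every $t=0,1,\dots,n$, and prove it by induction on $t$. The base case $t=0$ is trivial since both sets are empty. For the inductive step, car $t+1$ in the $\alpha$-process heads to spot $a_{t+1}$, while in the $\alpha^*$-process it heads to spot $a_{t+1}+k \pmod{n+1}$. By the inductive hypothesis, the preferred spot is occupied in the $\alpha$-process if and only if it is occupied in the $\alpha^*$-process. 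If both are free, both cars park, and the new occupied sets still differ by the rotation $k$. If both are occupied, the same coin is flipped; on heads, each car walks clockwise until the first free spot, and since the free-spot sets are rotations of one another, the two first-free-spots themselves differ by $k$; the tails case is symmetric. In either case $O_{t+1}(\alpha^*) = O_{t+1}(\alpha) + k$, completing the induction. (Note: since there are $n+1$ spots and only $n$ cars, a free spot always exists, so the search in either direction necessarily terminates.)

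Applying this with $t=n$, the unique vacant spot of $\alpha^*$ is the rotation by $k$ of the unique vacant spot of $\alpha$; equivalently, the events $\{\alpha \in S_i\}$ and $\{\alpha^* \in S_{j}\}$ coincide on the coupled probability space, where $j = (i+k)\bmod(n+1)$. Therefore
\begin{equation}
f_\alpha(i) = \PP(\alpha \in S_i \mid \alpha) = \PP(\alpha^* \in S_{j} \mid \alpha^*) = f_{\alpha^*}(j),
\end{equation}
which is the desired identity. The only step that requires any real care is verifying that the coupling is well-defined in the ``occupied'' case, in particular that one genuinely flips a single coin at the first collision (as opposed to flipping at every subsequent occupied spot) so that a common coin flip in the two processes produces matching directions; this is clear from the parking protocol as described in Section~\ref{subsec:coin-prob}, so the argument above goes through without obstruction.
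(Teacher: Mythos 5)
Your proposal is correct and rests on exactly the same idea as the paper's proof, namely the rotational invariance of the circular parking protocol under the shift by $k$ on $\mathbb{Z}/(n+1)\mathbb{Z}$. The paper simply asserts this symmetry in one sentence, whereas you make it rigorous with an explicit coupling of the coin flips and an induction showing the occupied sets satisfy $O_t(\alpha^*) = O_t(\alpha) + k \pmod{n+1}$; this is a welcome sharpening but not a different argument.
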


\begin{proof}
This follows from the fact that the preference vectors themselves are congruent modulo $(n+1)$. In particular, $\alpha^*$ is derived by adding $k \ (\textrm{mod}\ (n+1))$ to each preference in $\alpha$, so the probability that $\alpha \in S_i$ is equivalent to the probability that $\alpha^* \in S_j$.
\end{proof}

\begin{definition}\label{def:leading-pref}
Let $L_a$ denote the set of all preference vectors which have leading preference $a \in [n+1]$. That is, $L_a = \{\alpha \in [n+1]^n : a_1 = a\}.$
\end{definition}

\begin{remark}\label{remark:leading-size}
Notice that $|L_a| = (n+1)^n / (n+1) = (n+1)^{n-1}$, where we divide by $(n+1)$ because the first preference is fixed to be $a$.
\end{remark}


\begin{definition}\label{def:cdf-coin}
For all cars $i \in [n+1]$ and leading preferences $a \in [n+1]$, let $F_a \in [0, n+1]^{n+1}$ denote a length-$(n+1)$ vector satisfying
\begin{equation}
F_a^{(i)} = \sum\limits_{\forall \alpha \in {L_a}} f_\alpha(i).  
\end{equation}
\end{definition}
\
Intuitively, if we simulated the parking procedure for all preference vectors $\alpha \in L_a$, then $F_a^{(i)}$ denotes the expected number of preference vectors which have leading preference $a$ and leave spot $i$ vacant. Notice that $F_{a}^{(a)}=0$, because all $\alpha \in L_a$ satisfy $f_\alpha(a) = 0$. i.e., spot $a$ will always be occupied by the leading car with preference $a$. Applying Definition~\ref{def:cdf-coin}, it follows from Lemma~\ref{lemma:pmf-modular} that $F_a^{(i)} = F_{b}^{(j)}$, for all $a,b,i,j \in [n+1]$ satisfying $b - a \equiv j - i \ (\textrm{mod}\ (n+1))$. 


Next, purely as a visual aid, we construct a $(n+1) \times (n+1)$ matrix ${\bf M}$, where entry ${\bf M}_{a,i} = F_a^{(i)}$. Then ${\bf M}_{a,i} = {\bf M}_{a+1, i+1} = \cdots = {\bf M}_{a+n, i+n}$, where the subscripts are modulo $(n+1)$. In other words, each row in ${\bf M}$ is the same as the previous row, but with entries shifted one cell to the right, modulo $(n+1)$. That is,
\begin{eqnarray}
{\bf M}=
\begin{pmatrix}
F_1^{(1)} & F_1^{(2)} & \dots & F_1^{(n+1)}\\
F_2^{(1)} & F_2^{(2)} & \dots & F_2^{(n+1)}\\
\vdots & \vdots & \ddots & \vdots\\
F_{n+1}^{(1)} & F_{n+1}^{(2)} & \dots & F_{n+1}^{(n+1)}\\
\end{pmatrix}
=
\begin{pmatrix}
F_1^{(1)} & F_1^{(2)} & \dots & F_1^{(n+1)}\\
F_1^{(n+1)} & F_1^{(1)} & \dots & F_1^{(n)}\\
\vdots & \vdots & \ddots & \vdots\\
F_{1}^{(2)} & F_{1}^{(3)} & \dots & F_{1}^{(1)}\\
\end{pmatrix}.
\label{eqn:matrix-visual}
\end{eqnarray}

Equation~\eqref{eqn:matrix-visual} highlights the underlying symmetry via partitioning preference vectors based on leading preference. Note that while the simplified result is written in terms of entries from the first row, i.e., with subscript $a = 1$, similar equivalence holds for any leading preference $a \in [n+1]$. In particular, every column and row has the same sum. That is,
\begin{eqnarray}
\sum\limits_{a=1}^{n+1} F_a^{(1)} = \cdots = \sum\limits_{a=1}^{n+1} F_a^{(n+1)} = \sum\limits_{i=1}^{n+1} F_1^{(i)} = \cdots = \sum\limits_{i=1}^{n+1} F_{n+1}^{(i)}.
\label{eqn:equal-column-row}
\end{eqnarray}

This symmetry helps us compute the following probability:
\begin{equation}
\begin{aligned}
\PP(\alpha \in S_{n+1})
&= \sum_{\forall \alpha} \PP(\alpha \in S_{n+1} | \alpha)\ \PP(\alpha) \quad&&\text {(By law of total probability)}\\
&= \sum\limits_{a=1}^{n+1} \sum\limits_{\forall \alpha \in L_a} \PP(\alpha \in S_{n+1} | \alpha)\frac{1}{(n+1)^n}.
\end{aligned}
\end{equation}

Notice that we are partitioning the sample space of all $\alpha \in [n+1]^n$ into the disjoint sets $L_1, L_2, \dots, L_{n+1}$. And, since each $\alpha$ is chosen uniformly at random, we have that $\PP(\alpha) = 1/(n+1)^n$. We now complete the computation:
\begin{equation}
\begin{aligned}
\PP(\alpha \in S_{n+1})
&= \frac{1}{(n+1)^n}\sum\limits_{a=1}^{n+1} \sum\limits_{\forall \alpha \in L_a} f_\alpha(n+1) \quad&&\text{(By Definition \(\ref{def:pmf-coin}\))} \\
&= \frac{1}{(n+1)^n}\sum\limits_{a=1}^{n+1} F_a^{(n+1)} \quad&&\text{(By Definition \(\ref{def:cdf-coin}\))} \\
&= \frac{1}{(n+1)^n} \sum\limits_{i=1}^{n+1} F_1^{(i)} \quad&&\text{(By Equation~\eqref{eqn:equal-column-row})} \\
&= \frac{1}{(n+1)^n} \sum\limits_{i=1}^{n+1} \sum\limits_{\forall \alpha \in {L_1}} f_\alpha(i) \quad&&\textrm{(By Definition \(\ref{def:cdf-coin}\))} \\
&= \frac{1}{(n+1)^n} \sum\limits_{\forall \alpha \in {L_1}} \sum\limits_{i=1}^{n+1} f_\alpha(i) \\
&= \frac{1}{(n+1)^n} \sum\limits_{\forall \alpha \in {L_1}} 1 \quad&&\textrm{(by Definition \(\ref{def:pmf-coin}\))} \\
&= \frac{(n+1)^{n-1}}{(n+1)^n}. \quad&&\textrm{(by Remark~\(\ref{remark:leading-size}\))}
\end{aligned}
\end{equation}

Recall that the relevant sample space for Theorem~\ref{theorem:pollakcoin} is $[n]^n$, and not $[n+1]^n$. That is, we want to find $\PP(\alpha \in S_{n+1} | \alpha \in [n]^n)$. Since this is a conditional probability, it is useful to consider the intersection of the two events. From Lemma~\ref{lemma:does-not-contain}, we know that $\alpha \in S_{n+1}$ implies that $\alpha \in [n]^n$. However, the converse is not necessarily true. As a counterexample, suppose all $n$ cars prefer spot~1. 
Then, car 1 parks at spot 1, and car 2 may park at spot $(n+1)$ with probability $1-p$, since its preferred spot is occupied. Therefore, $\PP(\alpha \in S_{n+1} \cap \alpha \in [n]^n) = \PP(S_{n+1})$.

Now, we may complete the derivation of Theorem~\ref{theorem:pollakcoin}:
\begin{equation}
\begin{aligned}
\PP(\alpha \in S_{n+1} | \alpha \in [n]^n) 
&= \frac{\PP(\alpha \in S_{n+1} \cap \alpha \in [n]^n)}{\PP(\alpha \in [n]^n)}\\
&= \frac{\PP(\alpha \in S_{n+1})}{\PP(\alpha \in [n]^n)}\\
&= \frac{(n+1)^{n-1}/(n+1)^n}{n^n/(n+1)^{n}}\\
&= \frac{(n+1)^{n-1}}{n^n}.
\end{aligned}
\end{equation}

Therefore, the probability that an arbitrary preference vector in $[n]^n$ assigns all $n$ cars to a unique spot on a one-way street with $n$ spots is completely independent of the forward probability $p$.

{
\subsection{Effect of the forward probability}\label{sec:impactppin}
In the previous section, we showed  that the probability that a random preference vector $\alpha \in [n]^n$ is a parking function does not depend on the forward probability $p$. 
Thus, a natural question to ask is: what kind of parking statistics does depend on $p$? In this section, we present one such parking statistic and explore its statistical properties.

Our investigations in this section rely on a combinatorial construction which we term a \textit{parking function shuffle} and Abel's extension of the binomial theorem. 
These concepts were first discussed in Diaconis and Hicks \cite{diaconis} and later extended further in Kenyon and Yin \cite{KY}. 
Some asymptotic expansion formulas will also prove useful. We provide some background on these concepts first.

\begin{definition}\label{shuffle}
Let $1\leq k \leq n$. Say that $(a_1, \dots, a_{n-1})$ is a parking function shuffle of the parking function $\alpha \in \PF{k-1}$ and $\beta \in \PF{n-k}$ if $a_1, \dots, a_{n-1}$ is any permutation of the union of the two words $\alpha$ and $\beta+(k, \dots, k)$.
\end{definition}

\begin{example}
Take $n=8$ and $k=4$. Take $\alpha=(2, 1, 2) \in \PF{3}$ and $\beta=(1, 2, 4, 3) \in \PF{4}$. Then $(2, \underline{5},2, \underline{8}, \underline{7},1, \underline{6})$ is a shuffle of the two words $(2, 1, 2)$ and $(5, 6, 8, 7)$.
\end{example}

\begin{theorem}[Abel's extension of the binomial theorem, derived from Pitman \cite{Pitman} and Riordan \cite{Riordan}]\label{Abel}
Let
\begin{equation}\label{b}
A_n(x, y; p, q)=\sum_{s=0}^n \binom{n}{s} (x+s)^{s+p} (y+n-s)^{n-s+q}.
\end{equation}
Then
\begin{align}\label{b1}
A_n(x, y; p, q)&=A_n(y, x; q, p),\\
\label{b2}
A_n(x, y; p, q)&=A_{n-1}(x, y+1; p, q+1)+A_{n-1}(x+1, y; p+1, q),\\\label{b3}
A_n(x, y; p, q)&=\sum_{s=0}^{n} \binom{n}{s}s!(x+s)A_{n-s}(x+s, y; p-1, q).
\end{align}
Moreover, the following special instances hold via the basic recurrences listed above:
\begin{align}\label{1}
&A_n(x, y; -1, -1)=(x^{-1}+y^{-1})(x+y+n)^{n-1},
\\\label{2}
&A_n(x, y; -1, 0)=x^{-1}(x+y+n)^n,
\\
\label{3}
&A_n(x, y; -1, 1)=x^{-1} \sum_{s=0}^n \binom{n}{s} (x+y+n)^s (y+n-s) (n-s)!,
\\
\label{4}
&A_n(x, y; 0, 0)=\sum_{s=0}^n \binom{n}{s} (x+y+n)^s (n-s)!.
\end{align}
\end{theorem}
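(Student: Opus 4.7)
The plan is to handle this bundled statement in two phases: first establish the three algebraic recurrences (b1)--(b3) by direct manipulation of the defining sum \eqref{b}, then deduce the closed-form evaluations (1)--(4) by a coupled induction on $n$ driven by (b2).

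For (b1), substituting $s\mapsto n-s$ in \eqref{b} and using $\binom{n}{s}=\binom{n}{n-s}$ swaps $(x,p)$ with $(y,q)$ immediately. For (b2), I would decompose $\binom{n}{s}=\binom{n-1}{s}+\binom{n-1}{s-1}$ in \eqref{b}. The $\binom{n-1}{s}$ piece, after regrouping $(y+n-s)^{n-s+q}=((y+1)+(n-1-s))^{(n-1-s)+(q+1)}$, collapses into $A_{n-1}(x,y+1;p,q+1)$; the $\binom{n-1}{s-1}$ piece, after reindexing $s\mapsto s+1$, collapses into $A_{n-1}(x+1,y;p+1,q)$. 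For (b3), I would induct on $n$: expand $A_{n-s}(x+s,y;p-1,q)$ on the right-hand side, swap the order of the double sum via $u=s+t$, and use $\binom{n}{s}\binom{n-s}{u-s}=\binom{n}{u}\binom{u}{s}$ to reduce the inner sum to a finite-difference identity of the shape $\sum_{s=0}^{u}\binom{u}{s}s!(x+s)(x+u)^{-s}=x+u$, which admits a short inductive proof or a polynomial-identity argument in $x$.

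For the evaluations, the crux is (1), Abel's classical identity. I would prove (1)--(4) in lockstep by induction on $n$, using (b2) to couple them. For instance, taking $(p,q)=(-1,0)$ gives $A_n(x,y;-1,0)=A_{n-1}(x,y+1;-1,1)+A_{n-1}(x+1,y;0,0)$; substituting the inductive forms of (3) and (4) produces $x^{-1}\sum_{s=0}^{n-1}\binom{n-1}{s}(x+y+n)^s(n-1-s)!(x+y+n-s)$, which telescopes to $x^{-1}(x+y+n)^n$ after rewriting $\binom{n-1}{s}(n-1-s)!=(n-1)!/s!$, establishing (2). Analogous applications of (b2) at $(p,q)=(-1,-1)$, $(-1,1)$, and $(0,0)$ then establish (1), (3), and (4) respectively. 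As an alternative, (1) admits a combinatorial proof via Joyal-style bijections counting rooted labeled forests on $n+2$ vertices with two distinguished roots, bypassing the induction entirely.

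The main obstacle I expect is the interlocking nature of (1)--(4): they cannot be proved in isolation because (b2) couples them through simultaneous shifts of $p$ and $q$. I would therefore sequence the induction so that all four identities advance together from $n-1$ to $n$, verifying the bases $n=0,1$ directly to avoid circularity. The auxiliary finite-difference identity required for (b3) is the other technical pinch point, but it can be dispatched by viewing it as a polynomial identity of degree $u$ in $x$ and checking $u+1$ specializations.
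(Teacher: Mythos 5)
The paper itself offers no proof of this theorem---it is quoted as a known result of Pitman and Riordan---so your argument must stand on its own. Most of it does. The derivations of \eqref{b1}, \eqref{b2}, and \eqref{b3} are sound: the index reversal, the Pascal split, and the double-sum interchange reducing \eqref{b3} to the inner identity $\sum_{s=0}^{u}\binom{u}{s}\,s!\,(x+s)(x+u)^{u-s}=(x+u)^{u+1}$ all check out (that identity telescopes on writing the summand as $T_s-T_{s+1}$ with $T_s=\tfrac{u!}{(u-s)!}(x+u)^{u-s+1}$, so no induction on $n$ is actually needed for \eqref{b3}). Your worked derivation of \eqref{2} at level $n$ from \eqref{3} and \eqref{4} at level $n-1$ is also correct, as is the derivation of \eqref{1} from \eqref{2} at level $n-1$, using \eqref{b1} to convert the resulting $(0,-1)$ term into a $(-1,0)$ term.

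The gap is the claim that ``analogous applications of (b2)'' at $(p,q)=(-1,1)$ and $(0,0)$ establish \eqref{3} and \eqref{4}. The recurrence \eqref{b2} sends $(p,q)$ to $(p,q+1)$ and $(p+1,q)$, so applied at $(-1,1)$ it demands $A_{n-1}(\cdot;-1,2)$ and $A_{n-1}(\cdot;0,1)$, and applied at $(0,0)$ it demands $A_{n-1}(\cdot;0,1)$ and $A_{n-1}(\cdot;1,0)$. None of these parameter pairs lies in the family $\{(-1,-1),(-1,0),(-1,1),(0,0)\}$, even after the swap symmetry \eqref{b1}, so your coupled induction does not close: each step forces closed forms for an ever-growing set of exponent shifts. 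The repair is to use \eqref{b3}, which you proved but never deployed. Taking $(p,q)=(0,0)$ in \eqref{b3} and inserting \eqref{2} gives $A_n(x,y;0,0)=\sum_{s}\binom{n}{s}s!\,(x+s)\cdot(x+s)^{-1}(x+y+n)^{n-s}$, which reindexes to \eqref{4}; likewise $A_n(x,y;-1,1)=A_n(y,x;1,-1)$ by \eqref{b1}, and \eqref{b3} with $p=1$ together with \eqref{2} yields \eqref{3}. The logical order then becomes a strong induction on $n$: \eqref{2} at all levels below $n$ gives \eqref{3} and \eqref{4} at level $n-1$ via \eqref{b3}, which gives \eqref{2} at level $n$ via your telescoping computation, and \eqref{1} follows from \eqref{2}.
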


\begin{lemma}\label{CLT}
Let $X_1, X_2, \dots$ be iid Poisson$(1)$ random variables. Then
\begin{equation}
\PP(X_1+\cdots+X_n \leq n)=\frac{1}{2}+\frac{2}{3}\frac{1}{\sqrt{2\pi n}}+o\left(\frac{1}{\sqrt{n}}\right).
\end{equation}
\end{lemma}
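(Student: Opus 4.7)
The plan is to reduce the lemma to a classical asymptotic for the Poisson distribution. Since the $X_i$ are iid $\mathrm{Poisson}(1)$, the sum $S_n := X_1 + \cdots + X_n$ is $\mathrm{Poisson}(n)$, with mean and variance both equal to $n$ and third central moment $\mu_3 = 1$ (all cumulants of $\mathrm{Poisson}(1)$ equal $1$). The statement therefore reduces to the Uspensky-type asymptotic for the probability that a Poisson variable does not exceed its mean, which I would prove via the Edgeworth expansion with continuity correction for sums of iid lattice random variables. The key point is that the $\Theta(1/\sqrt{n})$ correction comes from two distinct sources: a half-integer continuity-correction shift and a skewness term, whose coefficients must combine to exactly $\tfrac{2}{3\sqrt{2\pi}}$.

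Concretely, I would invoke the one-term lattice Edgeworth expansion (see e.g.\ Feller, \emph{An Introduction to Probability Theory and Its Applications}, Vol.~II, or Petrov's monograph on sums of independent random variables): for iid integer-valued summands of span one with mean $\mu$, variance $\sigma^2$, and third central moment $\mu_3$, and for integer $m$,
$$\PP(S_n \leq m) = \Phi(z_m) - \frac{\mu_3}{6\sigma^3\sqrt{n}}\,(z_m^2 - 1)\,\phi(z_m) + o\!\left(\frac{1}{\sqrt{n}}\right),$$
where $z_m = (m - n\mu + \tfrac{1}{2})/(\sigma\sqrt{n})$ and $\Phi, \phi$ are the standard normal CDF and PDF. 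Plugging in $\mu = \sigma^2 = \mu_3 = 1$ and $m = n$ gives $z_m = 1/(2\sqrt{n}) \to 0$. A Taylor expansion of $\Phi$ around zero yields $\Phi(1/(2\sqrt{n})) = \tfrac{1}{2} + 1/(2\sqrt{2\pi n}) + O(n^{-3/2})$, while $(z_m^2 - 1)\phi(z_m) = -1/\sqrt{2\pi} + O(1/n)$, so the skewness correction contributes $+1/(6\sqrt{2\pi n}) + o(1/\sqrt{n})$. Summing the two contributions and using $\tfrac{1}{2} + \tfrac{1}{6} = \tfrac{2}{3}$ produces the stated formula.

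The main obstacle is the justification of the Edgeworth expansion at the required precision; the lattice continuity correction is crucial, since without it one only obtains the leading $\tfrac{1}{2}$ from the ordinary CLT. For a self-contained argument one can instead start from the explicit formula $\PP(S_n \leq n) = e^{-n}\sum_{k=0}^n n^k/k!$, substitute $k = n+j$, and apply Stirling to obtain $e^{-n} n^{n+j}/(n+j)! = (2\pi n)^{-1/2}\exp\!\bigl(-j^2/(2n) + j^3/(6n^2) + O(j^4/n^3)\bigr)$ for $|j| \leq n^{2/3}$, with an exponentially small tail by Chernoff concentration for Poisson. Approximating the resulting sum over $j \in \{-n,\dots,0\}$ by a Gaussian integral with an Euler--Maclaurin half-integer shift reproduces the $1/(2\sqrt{2\pi n})$ term, and first-order expansion of the cubic $j^3/(6n^2)$ inside the exponent, integrated against the Gaussian weight, reproduces the $1/(6\sqrt{2\pi n})$ term; summing recovers the claim.
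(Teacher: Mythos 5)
Your proposal is correct and follows essentially the same route as the paper: both invoke the continuity-corrected Edgeworth expansion for lattice sums, with your half-integer shift $z_m=(m-n\mu+\tfrac12)/(\sigma\sqrt{n})$ playing exactly the role of the paper's periodic term $D(x\sigma\sqrt{n})$ evaluated at $x=0$ (where $D(0)=\tfrac12$), and both combine the continuity-correction contribution $\tfrac12$ with the skewness contribution $\tfrac16$ to get the coefficient $\tfrac23$. Your computation is in fact more explicit than the paper's (which merely states the expansion), and the self-contained Stirling/Euler--Maclaurin alternative you sketch at the end is a sound backup but not needed.
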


\begin{proof}
We apply the continuity-corrected Edgeworth expansion as in Esseen \cite{Esseen} and Kolassa and McCullagh \cite{KM}, so that
\begin{multline}
\PP(X_1+\cdots+X_n \leq n+x\sqrt{n})=\PP\left(\frac{X_1+\cdots+X_n-n}{\sqrt{n}}\leq x\right)\\=\Phi(0)+\frac{\exp(-x^2/2)}{\sqrt{2\pi n}}\left(\frac{\mu_3}{6\sigma^3}(1-x^2)+\frac{1}{\sigma}D(x\sigma\sqrt{n})\right)+o\left(\frac{1}{\sqrt{n}}\right),
\end{multline}
where $\sigma$ is the standard deviation and $\mu_3$ is the third central moment of $X_1$, $\Phi(x)$ is the distribution function of the standard normal, and $D(x)=\lfloor x \rfloor-x+1/2$
is a periodic function that addresses the discontinuity in the distribution function of the discrete random variable $(X_1+\cdots+X_n-n)/\sqrt{n}$.
\end{proof}

We are now ready to show that the distribution of $a_n$, the parking preference of the last car, depends on $p$ and give explicit formulas.

\begin{customthm}{3}\label{component}
Consider the preference vector $\alpha \in [n]^n$, chosen uniformly at random. Then given that $\alpha \in \PF{n}$,
\begin{align}
&\PP(a_n=j|\alpha \in \PF{n})=\frac{2}{n+1}-\frac{1}{(n+1)^{n-1}} \Big[p\sum_{s=n-j+1}^{n-1} \binom{n-1}{s} (n-s)^{n-s-2} (s+1)^{s-1}\notag \\
&\hspace{5cm}+(1-p) \sum_{s=0}^{n-j-1} \binom{n-1}{s} (n-s)^{n-s-2} (s+1)^{s-1}\Big],
\end{align}
where $a_n$ denotes the parking preference of the last car.
\end{customthm}

\begin{remark}\label{rmk:symmetry}
Note the parking symmetry as observed in the introduction: $\PP(a_n=j|\alpha \in \PF{n})$ under protocol with parameter $p$ equals $\PP(a_n=n+1-j|\alpha \in \PF{n})$ under protocol with parameter $1-p$. See Figure \ref{distribution} and Table \ref{table:probs}.
\end{remark}

\begin{proof}[Proof of Theorem \ref{component}]
Cars $1, \dots, n-1$ have all parked along the one-way street before the $n$th car enters, leaving only one open spot $k$ for the $n$th car to park. Since a car cannot jump over an empty spot, the parking protocol implies that $(a_1, \dots, a_{n-1})$ is a parking function shuffle of $\alpha \in \PF{k-1}$ and $\beta \in \PF{n-k}$, and $\alpha$ and $\beta$ do not interact with each other. This open spot $k$ could be either the same as $j$, the preference of the last car, in which case the car parks directly. Or, $k$ could be bigger than or less than $j$, in which case whether the last car parks or not depends on the outcome of the biased coin flip, as it will dictate the car to go forward or backward. Using Theorem \ref{theorem:pollakcoin}, we have $\PP(a_n=j|\alpha \in \PF{n})$
\begin{align}
&=\frac{1}{(n+1)^{n-1}}\Big[(1-p)\sum_{k=1}^{j-1} \binom{n-1}{n-k} k^{k-2} (n-k+1)^{n-k-1}+\binom{n-1}{n-j} j^{j-2} (n-j+1)^{n-j-1} \notag \\
&\hspace{5cm} +p\sum_{k=j+1}^{n} \binom{n-1}{n-k} k^{k-2} (n-k+1)^{n-k-1}\Big]\\
&=\frac{1}{(n+1)^{n-1}} \Big[\sum_{k=1}^{n} \binom{n-1}{n-k} k^{k-2} (n-k+1)^{n-k-1}-p\sum_{k=1}^{j-1} \binom{n-1}{n-k} k^{k-2} (n-k+1)^{n-k-1} \notag \\
&\hspace{5cm} -(1-p)\sum_{k=j+1}^{n} \binom{n-1}{n-k} k^{k-2} (n-k+1)^{n-k-1}\Big]\label{eq:kk}\\
&=\frac{1}{(n+1)^{n-1}} \Big[\sum_{s=0}^{n-1} \binom{n-1}{s} (n-s)^{n-s-2} (s+1)^{s-1}-p\sum_{s=n-j+1}^{n-1} \binom{n-1}{s} (n-s)^{n-s-2} (s+1)^{s-1} \notag \\
&\hspace{5cm} -(1-p) \sum_{s=0}^{n-j-1} \binom{n-1}{s} (n-s)^{n-s-2} (s+1)^{s-1} \Big] \label{eq:ss}\\
&=\frac{2}{n+1}-\frac{1}{(n+1)^{n-1}} \Big[p\sum_{s=n-j+1}^{n-1} \binom{n-1}{s} (n-s)^{n-s-2} (s+1)^{s-1}\notag \\
&\hspace{5cm}+(1-p) \sum_{s=0}^{n-j-1} \binom{n-1}{s} (n-s)^{n-s-2} (s+1)^{s-1}\Big] \label{eq: abel}.
\end{align}
Here the binomial coefficient $\binom{n-1}{n-k}$ accounts for the shuffling of the length $k-1$ subsequence and the length $n-k$ subsequence of parking preferences among the $1$st through the $(n-1)$st cars. We used a simple change of variables $s=n-k$ going from (\ref{eq:kk}) to \ref{eq:ss}. From (\ref{eq:ss}) to (\ref{eq: abel}), we applied Abel's binomial identity (\ref{1}): $A_{n-1}(1, 1; -1, -1)=2(n+1)^{n-2}$.
\end{proof}

\begin{customthm}{4}\label{mean}
Take $n$ large. For preference vector $\alpha \in [n]^n$ chosen uniformly at random, we have
\begin{equation}
\mathbb{E}(a_n | \alpha \in \PF{n})=\frac{n+1}{2}-(2p-1)\Big[\frac{\sqrt{2\pi}}{4}n^{1/2}-\frac{7}{6}\Big]+o(1).
\end{equation}
\end{customthm}

\begin{remark}
Note the parking symmetry as observed in the introduction: the sum of $\mathbb{E}(a_n | \alpha \in \PF{n})$ under protocol with parameter $p$ and $\mathbb{E}(a_n | \alpha \in \PF{n})$ under protocol with parameter $1-p$ is $n+1$. The lower order correction terms from $(n+1)/2$ for $\mathbb{E}(a_n | \alpha \in \PF{n})$ vanish completely under protocol with parameter $p=1/2$, and $\mathbb{E}(a_n | \alpha \in \PF{n})=(n+1)/2$ exactly. See Figure \ref{fig:mean}.
\end{remark}

\begin{figure}[htp]
\centering
\includegraphics[width=4in, trim= 0 0cm 0cm 2cm, clip]{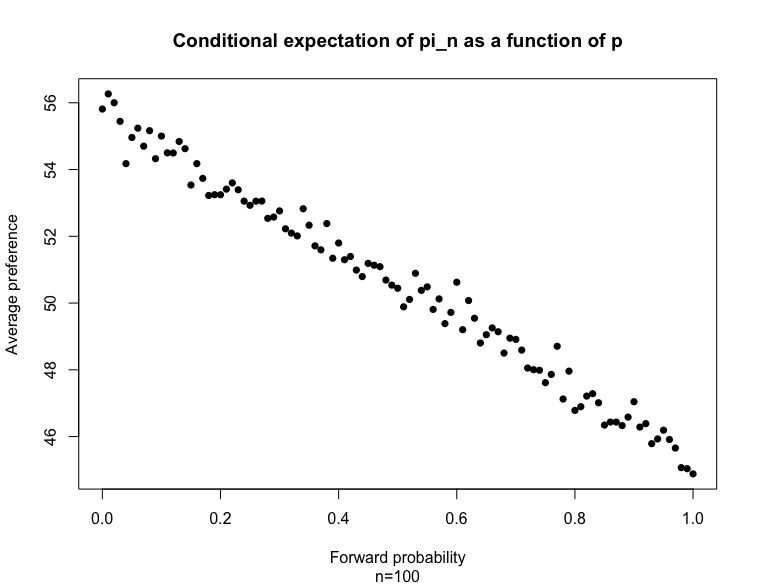}
\caption{The conditional expectation of $a_n$ (parking preference of the last car) given $\alpha \in \PF{n}$ under the probabilistic parking protocol with parameter $p$. Here size $n=100$ and $100,000$ samples are drawn uniformly at random.}\label{fig:mean}
\end{figure}

\begin{proof}[Proof of Theorem \ref{mean}]
By Theorem \ref{component},
\begin{align}
&\mathbb{E}(a_n | \alpha \in \PF{n})=\sum_{j=1}^n j\PP(a_n=j|\alpha \in \PF{n}) \notag \\
&=n-\frac{1}{(n+1)^{n-1}} \sum_{j=1}^n j\Big[p\sum_{s=n-j+1}^{n-1} \binom{n-1}{s} (n-s)^{n-s-2} (s+1)^{s-1}\notag \\
&\hspace{5cm}+(1-p) \sum_{s=0}^{n-j-1} \binom{n-1}{s} (n-s)^{n-s-2} (s+1)^{s-1}\Big] \notag \\
&=n-\frac{1}{(n+1)^{n-1}} \sum_{s=0}^{n-1} \binom{n-1}{s} (n-s)^{n-s-2} (s+1)^{s-1} \Big[p\sum_{j=n-s+1}^n j+(1-p)\sum_{j=1}^{n-s-1} j \Big] \notag \\
&=n-\frac{1}{2(n+1)^{n-1}} \Bigg[\sum_{s=0}^{n-1} \binom{n-1}{s} (n-s)^{n-s-2} (s+1)^{s-1} \cdot \notag \\
&\hspace{4cm} \cdot [(1-p)(n-s)^2+p(n-s)(s+1)+p(n+1)(s+1)-(n-s)-p(n+1)]\Bigg] \label{A1}\\
&=n-\frac{1}{2(n+1)^{n-1}} \Big[(1-p)A_{n-1}(1, 1; -1, 1)+pA_{n-1}(1, 1; 0, 0)\notag \\
&\hspace{4cm}+(pn+p-1)A_{n-1}(1, 1; -1, 0)-p(n+1)A_{n-1}(1, 1; -1, -1) \Big], \label{A2}
\end{align}
where Abel's binomial theorem is used multiple times from \eqref{A1} to \eqref{A2}.

We proceed to estimate \eqref{A2} asymptotically. Using \eqref{1}, $A_{n-1}(1, 1; -1, -1)=2(n+1)^{n-2}$ and using \eqref{2}, $A_{n-1}(1, 1; -1, 0)=(n+1)^{n-1}$. By \eqref{3} and \eqref{4},
\begin{align}
&(1-p)A_{n-1}(1, 1; -1, 1)+pA_{n-1}(1, 1; 0, 0) \notag \\
&=(1-p) \sum_{s=0}^{n-1} \binom{n-1}{s} (n+1)^s (n-s) (n-1-s)! +p \sum_{s=0}^{n-1} \binom{n-1}{s} (n+1)^s (n-1-s)! \notag \\
&=(n-np+p)(n+1)^{n-1}+(2p-1)(n-1)! \sum_{s=0}^{n-2} \frac{(n+1)^s}{s!}.
\end{align}
From Stirling's formula,
\begin{align}
(n-1)! \sim \sqrt{2\pi (n-1)}e^{-(n-1)} (n-1)^{n-1} \left[1+\frac{1}{12(n-1)}\right].
\end{align}
We also recognize that
\begin{equation}
e^{-(n+1)} \sum_{s=0}^{n-2} \frac{(n+1)^s}{s!}
\end{equation}
equals the probability that the sum of $n+1$ iid Poisson$(1)$ random variables is less than or equal to $n-2$, and is asymptotic to
\begin{equation}
\frac{1}{2}+\frac{2}{3}\frac{1}{\sqrt{2\pi(n+1)}}-3e^{-(n+1)} \frac{(n+1)^{n-1}}{(n-1)!}
\end{equation}
from the Edgeworth expansion in Lemma \ref{CLT}. Combining the above,
\begin{align}
\mathbb{E}(a_n | \alpha \in \PF{n})&=\frac{n+1}{2}-(2p-1)\frac{(n-1)!}{2(n+1)^{n-1}} \sum_{s=0}^{n-2} \frac{(n+1)^s}{s!} \notag\\
&=\frac{n+1}{2}-(2p-1)\Big[\frac{\sqrt{2\pi}}{4}n^{1/2}-\frac{7}{6}\Big]+o(1).
\end{align}
\end{proof}

\subsection{Why the forward probability $p=1/2$ is special}\label{sec:rate}
The conditional distribution of $a_n$ for the classical model is close to a uniform distribution over $[n]$ in a precise sense given by the total variation distance. 
Building upon the results in the last section, in this section we show that the presence of $p$ has a consequential impact on this rate of convergence to uniform. 
Specifically, for $p=1/2$ the symmetry provided by $p$ plays an important role of speeding up the rate of convergence. 
This explains why in the histogram of Figure \ref{distribution} for $p=1/2$ we already see a distribution that looks uniform, whereas for other values of $p$ the presence of more mass on small or large values of $[n]$ is quite apparent.

In order to better discuss our results in this section, let us introduce a few definitions and set our notation. For two distributions $P$ and $Q$ over $[n]$, their total variation distance is given by
\begin{equation}\label{def:dTV}
    \| P - Q \|_{\TV} := \frac{1}{2}\sum_{j=1}^n| P(j) - Q(j) |.
\end{equation}
For $p\in[0,1]$, for notational convenience, let $Q_{n,p}$ be the conditional probability of $a_n$ given $\alpha \in PF_n$ under the probabilistic parking protocol with parameter $p$. That is, $Q_{n,p}$ is the distribution over $[n]$ established in Theorem \ref{component}: $Q_{n, p}(j)=\PP(a_n=j|\alpha \in \PF{n})$. Additionally, let $\unif{n}$ be the uniform distribution over $[n]$: $\unif{n}(j)=1/n$ for all $j \in [n]$.

In \cite{diaconis} Diaconis and Hicks showed that $\| Q_{n,1}- \unif{n} \|_{\TV}$ goes to zero as $n$ goes to infinity. They also conjectured that the rate of convergence is $O(1/\sqrt{n})$. In \cite{bellin2021} Bellin confirmed that their conjecture is indeed true.
\begin{customthm}{5}\label{thm:conv12}
Let $Q_{n,p}(\cdot)$ be $\PP(a_n = \cdot \;|\; \alpha \in \PF{n})$ under the probabilistic parking protocol with parameter $p$. For $p=1/2$, the following bound holds
    \begin{equation}
     \| Q_{n,1/2}- \unif{n} \|_{\TV} = \Theta\left(\frac{1}{n}\right).
    \end{equation}
\end{customthm}
\begin{proof} We use the definition of total variation distance together with Theorem \ref{component} as follows. For $j \in [n]$, Theorem \ref{component} gives us
\begin{align}
    &Q_{n,1/2}(j)=\frac{2}{n+1}-\frac{1}{(n+1)^{n-1}} \Big[\frac{1}{2}\sum_{s=n-j+1}^{n-1} \binom{n-1}{s} (n-s)^{n-s-2} (s+1)^{s-1}\notag \\\label{rhs}
    &\hspace{6cm}+\frac{1}{2} \sum_{s=0}^{n-j-1} \binom{n-1}{s} (n-s)^{n-s-2} (s+1)^{s-1}\Big].
\end{align}
Adding and subtracting $\binom{n-1}{n-j}j^{j-2}(n-j+1)^{n-j-1}/2(n+1)^{n-1}$ on the right-hand side of \eqref{rhs}, and recalling that $A_{n-1}(1,1;-1,-1) = 2(n+1)^{n-2}$ yields
\begin{align}
        Q_{n,1/2}(j) & = \frac{2}{n+1} - \frac{1}{(n+1)^{n-1}}\left[\frac{1}{2}A_{n-1}(1,1;-1,-1)-\frac{1}{2}\binom{n-1}{n-j}j^{j-2}(n-j+1)^{n-j-1} \right] \notag \\\label{eq:identity}
        & = \frac{1}{n+1} + \frac{\binom{n-1}{n-j}j^{j-2}(n-j+1)^{n-j-1}}{2(n+1)^{n-1}}.
\end{align}
Thus, substituting the identity in \eqref{eq:identity} into \eqref{def:dTV}, we obtain
\begin{equation}\label{eq:dTV}
    \begin{split}
        \| Q_{n,1/2}- \unif{n} \|_{\TV} & = \frac{1}{2}\sum_{j=1}^n\left | \frac{1}{n+1} + \frac{\binom{n-1}{n-j}j^{j-2}(n-j+1)^{n-j-1}}{2(n+1)^{n-1}} - \frac{1}{n}\right | \\
        & = \frac{1}{2}\sum_{j=1}^n\left | \frac{\binom{n-1}{n-j}j^{j-2}(n-j+1)^{n-j-1}}{2(n+1)^{n-1}} - \frac{1}{n(n+1)}\right | \\
        & = \frac{1}{2}\sum_{s=0}^{n-1}\left | \frac{\binom{n-1}{s}(n-s)^{n-s-2}(s+1)^{s-1}}{2(n+1)^{n-1}} - \frac{1}{n(n+1)}\right | \\
        & \le \frac{1}{2}\sum_{s=0}^{n-1}\left[\frac{\binom{n-1}{s}(n-s)^{n-s-2}(s+1)^{s-1}}{2(n+1)^{n-1}} + \frac{1}{n(n+1)}\right] \\
        & = \frac{A_{n-1}(1,1;-1,-1)}{4(n+1)^{n-1}}+\frac{1}{2(n+1)} = \frac{1}{n+1} \sim \frac{1}{n}.
    \end{split}
\end{equation}
To obtain a lower bound of the same order, notice that 
\begin{equation}\label{eq:dTV}
    \begin{split}
        \| Q_{n,1/2}- \unif{n} \|_{\TV} & = \frac{1}{2}\sum_{s=0}^{n-1}\left | \frac{\binom{n-1}{s}(n-s)^{n-s-2}(s+1)^{s-1}}{2(n+1)^{n-1}} - \frac{1}{n(n+1)}\right | \\
        & \ge \frac{1}{2}\left | \frac{\binom{n-1}{n-1}(1)^{-1}n^{n-2}}{2(n+1)^{n-1}} - \frac{1}{n(n+1)}\right |\\ 
        & \ge \frac{n^{n-2}}{4(n+1)^{n-1}} - \frac{1}{n(n+1)} \sim \frac{1}{4e} \frac{1}{n},
    \end{split}
\end{equation}
which proves that $\| Q_{n,1/2}- \unif{n} \|_{\TV} = \Theta(1/n)$.
\end{proof}
Next we show that indeed the case $p=1/2$ is special when considering the rate of convergence of $\| Q_{n,p}- \unif{n} \|_{\TV}$ to zero. We show that for any $p$ other than $p=1/2$, the sequence $(\| Q_{n,p}- \unif{n} \|_{\TV})_{n\ge 1}$ cannot converge to zero much faster than the classical model given by $p=1$. 
\begin{proposition}\label{prop:dTVp} For any $p\neq 1/2$, the following bound holds
\begin{equation}
|2p-1|\| Q_{n,1}- \unif{n} \|_{\TV}\le \| Q_{n,p}- \unif{n} \|_{\TV} \le \| Q_{n,1}- \unif{n} \|_{\TV}.
\end{equation}
\end{proposition}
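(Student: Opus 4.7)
The plan is to leverage the convex decomposition of $Q_{n,p}$ that is already hinted at in the paper just before the statement of Theorem \ref{thm:other}, together with the parking symmetry of Remark \ref{rmk:symmetry}, and then derive both inequalities from the standard triangle and reverse triangle inequalities for the $\ell^1$ (total variation) seminorm on signed measures on $[n]$.

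\textbf{Step 1: Express $Q_{n,p}$ as a convex combination of $Q_{n,1}$ and $Q_{n,0}$.} Directly from Theorem \ref{component}, reading off the coefficients of $p$ and $1-p$, one sees that
\begin{equation*}
Q_{n,p}(j) \;=\; p\, Q_{n,1}(j) + (1-p)\, Q_{n,0}(j) \qquad \text{for every } j \in [n],
\end{equation*}
since the $p$-independent part $\tfrac{2}{n+1}$ matches and the two Abel-like tail sums correspond respectively to the $p=1$ and $p=0$ specializations. Consequently,
\begin{equation*}
Q_{n,p} - \unif{n} \;=\; p\bigl(Q_{n,1} - \unif{n}\bigr) + (1-p)\bigl(Q_{n,0} - \unif{n}\bigr).
\end{equation*}

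\textbf{Step 2: Use parking symmetry to identify $\|Q_{n,0}-\unif{n}\|_{\TV}$ with $\|Q_{n,1}-\unif{n}\|_{\TV}$.} By Remark \ref{rmk:symmetry}, $Q_{n,0}(j) = Q_{n,1}(n+1-j)$ for all $j$. Since $\unif{n}$ is invariant under the involution $j \mapsto n+1-j$, the change of variables $j \mapsto n+1-j$ in the sum defining total variation distance gives
\begin{equation*}
\|Q_{n,0} - \unif{n}\|_{\TV} \;=\; \|Q_{n,1} - \unif{n}\|_{\TV}.
\end{equation*}

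\textbf{Step 3: Derive the two inequalities.} The upper bound is the triangle inequality applied to the decomposition in Step 1, using $p, 1-p \ge 0$:
\begin{equation*}
\|Q_{n,p}-\unif{n}\|_{\TV} \;\le\; p\,\|Q_{n,1}-\unif{n}\|_{\TV} + (1-p)\,\|Q_{n,0}-\unif{n}\|_{\TV} \;=\; \|Q_{n,1}-\unif{n}\|_{\TV},
\end{equation*}
where the last equality is Step 2. The lower bound is the reverse triangle inequality $\|A+B\|_{\TV} \ge \bigl|\|A\|_{\TV} - \|B\|_{\TV}\bigr|$ applied to $A = p(Q_{n,1}-\unif{n})$ and $B = (1-p)(Q_{n,0}-\unif{n})$, which, again after invoking Step 2, yields
\begin{equation*}
\|Q_{n,p}-\unif{n}\|_{\TV} \;\ge\; \bigl|p - (1-p)\bigr|\,\|Q_{n,1}-\unif{n}\|_{\TV} \;=\; |2p-1|\,\|Q_{n,1}-\unif{n}\|_{\TV}.
\end{equation*}

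There is no serious obstacle here: once the convex combination identity of Step 1 is extracted from Theorem \ref{component} and the symmetry of Step 2 is noted, the bounds are one-line applications of triangle inequalities. The only thing worth double-checking is that the $\ell^1$ seminorm $\tfrac{1}{2}\sum_j|\mu(j)|$ on signed measures is positively homogeneous and satisfies both the triangle and reverse triangle inequalities, which is immediate. This proposition then feeds into Theorem \ref{thm:other}: combined with Bellin's $\Theta(1/\sqrt n)$ rate for the classical ($p=1$) model cited just before the statement, it pins down $\|Q_{n,p}-\unif{n}\|_{\TV} = \Theta(1/\sqrt n)$ for every fixed $p \neq 1/2$.
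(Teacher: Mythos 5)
Your proof is correct and follows essentially the same route as the paper: the convex decomposition $Q_{n,p}=pQ_{n,1}+(1-p)Q_{n,0}$ from Theorem \ref{component}, the symmetry $\|Q_{n,0}-\unif{n}\|_{\TV}=\|Q_{n,1}-\unif{n}\|_{\TV}$ from Remark \ref{rmk:symmetry}, and the triangle inequality for the upper bound. The only cosmetic difference is that you invoke the reverse triangle inequality $\|A+B\|_{\TV}\ge\bigl|\|A\|_{\TV}-\|B\|_{\TV}\bigr|$ directly, whereas the paper applies $|a+b|\ge|a|-|b|$ termwise after assuming $p>1/2$ and appealing to symmetry for $p<1/2$; these are the same argument.
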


\begin{remark}
Proposition \ref{prop:dTVp} shows that the rate of convergence is of the same order for any $p \neq 1/2$. This important fact will be used to establish the exact rate of convergence for generic $p \neq 1/2$ in Theorem \ref{thm:other}.
\end{remark}

\begin{proof}[Proof of Proposition \ref{prop:dTVp}]
Recall that by Theorem~\ref{component}, for any $p\in [0,1]$ and $n \in \mathbb{N}$, $Q_{n,p}$ is the convex combination of $Q_{n,1}$ and $Q_{n,0}$. More precisely,
\begin{equation}\label{eq:convex}
    Q_{n,p} = pQ_{n,1} + (1-p)Q_{n,0}.
\end{equation}
Moreover, since $Q_{n,1}(j) = Q_{n,0}(n+1-j)$ (see Remark~\ref{rmk:symmetry}), it follows that 
\begin{equation}\label{eq:dTV01}
\| Q_{n,1}- \unif{n} \|_{\TV} = \| Q_{n,0}- \unif{n} \|_{\TV}.
\end{equation}

Let us assume $p>1/2$ for the moment. Via the above symmetry, the reader will notice that the case for~$p<1/2$ follows a similar argument. From the definition of the total variation distance combined with \eqref{eq:convex} and that $|a+b|\ge |a|-|b|$, we obtain
\begin{equation}\label{ineq:lb}
    \begin{split}
        \| Q_{n,p}- \unif{n} \|_{\TV} & = \| pQ_{n,1} + (1-p)Q_{n,0}- \unif{n} \|_{\TV} \\
        & = \frac{1}{2}\sum_{j=1}^n\left| pQ_{n,1}(j) + (1-p)Q_{n,0}(j) - \frac{1}{n}\right|\\
        &=\frac{1}{2}\sum_{j=1}^n\left| pQ_{n,1}(j) - \frac{p}{n} + (1-p)Q_{n,0}(j) - \frac{(1-p)}{n}\right|\\
        & \ge \frac{1}{2}\sum_{j=1}^n\left| pQ_{n,1}(j) - \frac{p}{n} \right| - \frac{1}{2}\sum_{j=1}^n\left |(1-p)Q_{n,0}(j) - \frac{(1-p)}{n}\right|\\
        & = p\| Q_{n,1}- \unif{n} \|_{\TV} -(1-p)\| Q_{n,0}- \unif{n} \|_{\TV}\\
        & = (2p-1)\| Q_{n,1}- \unif{n} \|_{\TV}.
    \end{split}
\end{equation}
For the upper bound, we instead apply the triangle inequality to the third line of \eqref{ineq:lb}:
\begin{equation}\label{ineq: ub}
    \begin{split}
        \| Q_{n,p}- \unif{n} \|_{\TV} & =\frac{1}{2}\sum_{j=1}^n\left| pQ_{n,1}(j) - \frac{p}{n} + (1-p)Q_{n,0}(j) - \frac{(1-p)}{n}\right|\\
        & \le \frac{1}{2}\sum_{j=1}^n\left| pQ_{n,1}(j) - \frac{p}{n} \right| + \frac{1}{2}\sum_{j=1}^n\left |(1-p)Q_{n,0}(j) - \frac{(1-p)}{n}\right|\\
        & = \frac{1}{2} \left(p\| Q_{n,1}- \unif{n} \|_{\TV} + (1-p)\| Q_{n,0}- \unif{n} \|_{\TV}\right)\\
        & = \| Q_{n,1}- \unif{n} \|_{\TV}.\qedhere
    \end{split}
\end{equation}
\end{proof}

For the next result we use the following alternative definition for the total variation distance.
\begin{proposition}[Proposition 4.5 in Levin and Peres \cite{levin2017markov}]\label{prop:altdTV} Let $P$ and $Q$ be two distributions over $[n]$, then 
    \begin{equation}\label{test}
        \| P - Q \|_{\TV} = \frac{1}{2} \sup \left \lbrace \sum_{j=1}^n f(j)P(j) - \sum_{j=1}^nf(j)Q(j): \max_j|f(j)| \le 1 \right \rbrace.
    \end{equation}
    
\end{proposition}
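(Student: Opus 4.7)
The plan is to prove the identity by a two-sided argument: exhibit a specific test function that attains the supremum, and then verify that no admissible $f$ can do better. The guiding idea is that the optimal choice of $f$ should be piecewise constant, taking values $\pm 1$ according to the sign of $P - Q$.

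First I would introduce $A = \{j \in [n] : P(j) \geq Q(j)\}$ and its complement $A^c$. Since $P$ and $Q$ are both probability distributions on $[n]$, the equality $\sum_j (P(j) - Q(j)) = 0$ rearranges to $\sum_{j \in A}(P(j)-Q(j)) = \sum_{j \in A^c}(Q(j)-P(j))$. Substituting this into \eqref{def:dTV} yields the handy identity $\|P-Q\|_{\TV} = \sum_{j \in A}(P(j)-Q(j))$. For the lower bound on the supremum in \eqref{test}, I would then evaluate the objective at the test function $f^\star(j) = \mathbf{1}_A(j) - \mathbf{1}_{A^c}(j)$, which clearly satisfies $\max_j|f^\star(j)| \leq 1$ and produces $\sum_j f^\star(j)(P(j)-Q(j)) = 2\|P-Q\|_{\TV}$.

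For the matching upper bound, I would take an arbitrary admissible $f$ with $\max_j|f(j)| \leq 1$ and split the sum over $A$ and $A^c$. On $A$ the differences $P(j)-Q(j)$ are non-negative, so $f(j) \leq 1$ gives a contribution of at most $\sum_{j \in A}(P(j)-Q(j))$; on $A^c$ the differences are non-positive, so $f(j) \geq -1$ gives a contribution of at most $\sum_{j \in A^c}(Q(j)-P(j))$. Adding the two and invoking the identity above yields $\sum_j f(j)(P(j)-Q(j)) \leq 2\|P-Q\|_{\TV}$, and taking the supremum over $f$ then dividing by $2$ completes the proof. There is no genuine obstacle here: the whole argument reduces to a signed-decomposition trick, and the only conceptual content is recognizing that the extremal $f$ aligns sign-wise with $P - Q$.
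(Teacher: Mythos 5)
Your proof is correct: the sign-decomposition via $A=\{j: P(j)\ge Q(j)\}$, the extremal test function $f^\star=\mathbf{1}_A-\mathbf{1}_{A^c}$, and the matching upper bound for arbitrary $|f|\le 1$ together establish the identity. The paper itself offers no proof of this proposition---it is cited directly from Levin and Peres---and your argument is essentially the standard one given in that reference, so there is nothing to reconcile.
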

With the alternative definition in hand, we can show the right order of the rate of convergence of $\| Q_{n,p}- \unif{n}\|_{\TV}$ for $p\neq 1/2$ which is our next result.
\begin{customthm}{6}\label{thm:other}
Let $Q_{n,p}(\cdot)$ be $\PP(a_n = \cdot \;|\; \alpha \in \PF{n})$ under the probabilistic parking protocol with parameter $p$. For $p\neq 1/2$, we have that 
    \begin{equation}
     \| Q_{n,p}- \unif{n} \|_{\TV} = \Theta\left(\frac{1}{\sqrt{n}}\right).   
    \end{equation}
\end{customthm}
\begin{remark}
The above result for generic $p$ is in sharp contrast with the behavior of the model at $p=1/2$ for which the rate of convergence is $1/n$ by Theorem~\ref{thm:conv12}. Before we start the proof of the theorem, let us explain the flow of our argument. In Theorem 1 of \cite{bellin2021}, the author proved that for the classical model (corresponding to $p=1$), the rate of convergence of $(\| Q_{n,1}- \unif{n} \|_{\TV})_{n\ge 1}$ is $O(1/\sqrt{n})$. This result together with the second inequality of Proposition \ref{prop:dTVp} shows that in the probabilistic model under consideration, the rate of convergence is actually $O(1/\sqrt{n})$ for any $p\neq 1/2$. We then go one step further and establish that $1/\sqrt{n}$ is indeed the right order for the rate of convergence. Besides the first inequality of Proposition \ref{prop:dTVp}, a central ingredient in the proof is a clever choice of the test function $f$ as in Equation \eqref{test}.
\end{remark}
\begin{proof} For the upper bound, notice that by Proposition \ref{prop:dTVp} the rate of convergence in the general case is bounded from above by the corresponding rate in the classical case $p=1$. Then, taking $k=1$ in Theorem 1 of \cite{bellin2021}, we obtain that 
    \begin{equation}
    \| Q_{n,p}- \unif{n} \|_{\TV} = O\left( \frac{1}{\sqrt{n}}\right).
    \end{equation}
For the lower bound, define the function $f$ over $[n]$ such that $f(j) = j/n$ for all $j\in [n]$. Notice that by the definition of $f$ we have that
\begin{equation}\label{eq:uni}
    \sum_{j=1}^n f(j)\unif{n}(j) = \frac{n+1}{2n}.
\end{equation}
Whereas, by Theorem \ref{mean} we have
\begin{equation}\label{eq:qnp}
    \sum_{j=1}^nf(j)Q_{n,p}(j) = \frac{\mathbb{E}(a_n | \alpha \in \PF{n})}{n}=\frac{n+1}{2n}-(2p-1)\left[\frac{\sqrt{2\pi}}{4\sqrt{n}}-\frac{7}{6n}\right]+\frac{o(1)}{n}.
\end{equation}
Thus, for $p=1$, substitution \eqref{eq:uni} and \eqref{eq:qnp} into the identity given by Proposition~\ref{prop:altdTV} we have that 
\begin{equation}
    \begin{split}
    \| Q_{n,1}- \unif{n} \|_{\TV} & \ge \frac{1}{2}\left[\sum_{j=1}^n f(j)\unif{n}(j) -  \sum_{j=1}^nf(j)Q_{n,p}(j)\right] \\
    & = \frac{1}{2} \left[\frac{\sqrt{2\pi}}{4\sqrt{n}}-\frac{7}{6n}+\frac{o(1)}{n}\right].
    \end{split}
\end{equation}
Finally, by Proposition \ref{prop:dTVp} we know that 
\begin{equation}
\| Q_{n,p}- \unif{n} \|_{\TV} \ge |2p-1|\| Q_{n,1}- \unif{n} \|_{\TV} \sim |2p-1| \frac{\sqrt{2\pi}}{8} \frac{1}{\sqrt{n}},
\end{equation}
which proves the theorem.
\end{proof}

}

\subsection{A statistical interpretation}\label{subsec:stats}

Let us contextualize the Coin Problem with a more formal statistical framework. We again consider the problem of parking $n$ cars on a circle with $(n+1)$ spots. Suppose we have the random vector $\alpha = (a_1, a_2, \dots a_n)$, where each $a_i$ is drawn with replacement from $[n+1]$. That is, $\alpha$ denotes a random sample of parking preferences. Then, consider the following random variables:
\begin{itemize}
    \item Let $Y_i \sim g(\alpha | a_1, \dots, a_{i-1} ; p)$ be a random variable denoting the spot at which car $i$ parks. Notice that this distribution is parameterized by the forward probability $p \in [0, 1]$, and furthermore, is conditional on previous preferences.
    \item Let $T$ denote the vacant spot on the circle, after all $n$ cars have parked, similar to Definition~\ref{def:bucket}. Since $[n+1]$ denotes the spots on the circle, we have $T  = \sum_{i=1}^{n+1} i - \sum_{j = 1}^{n} Y_j$.
    \item Finally, let $Z = \mathbb{I}(\alpha \in S_{n+1}) = \mathbb{I}(T_i \ne n+1)$, where $\mathbb{I}$ is the indicator function. That is, $Z$ checks whether $\alpha$ is a parking function on the one-way street.
\end{itemize}

Reframing the results of Theorem~\ref{theorem:pollakcoin}, we have that $Z \sim \mathrm{Bernoulli}(\theta)$, for $\theta = (n+1)^{n-1}/n^n$. Similar to Theorem~\ref{theorem:pollakcoin}, the sampling distribution of $Z$ is independent of $p$. Then, if we applied the probabilistic parking protocol to each preference vector in $[n+1]^n$, treating each instance as an independent Bernoulli trial, we expect $(n+1)^{n-1}$ of those preference vectors to be parking functions. Finally, we emphasize that we precisely derived this Bernoulli distribution, without knowledge of the underlying conditional distribution $g(\boldsymbol{\cdot})$, which determines the cars' individual parking outcomes.

\subsection{The Coin Problem for $m \le n$ cars}\label{subsec:coin-m-n}

Having formalized the symmetric properties of parking on a circle in Section \ref{subsection:probabilityandparkingfunctions}, we now prove that Theorem~\ref{theorem:pollakcoin} generalizes to the problem of parking $m \in [n]$ cars on a one-way street with $n$ spots. Here, we slightly adjust the definition of a parking function to mean any preference vector in $[n]^m$ which assigns all $m$ cars to a unique spot on the one-way street with $n$ spots.

\begin{customthm}{2}
Consider the preference vector $\alpha \in [n]^m$, chosen uniformly at random. Then
\begin{equation}
\PP(\alpha \in \PF{n} | \alpha \in [n]^m) = \frac{(n+1-m)(n+1)^{m-1}}{n^m}.
\end{equation}
\end{customthm}

\begin{proof}
With $m \le n$ cars, it is possible that more than 1 spot is vacant after all $m$ cars park on a circle with spots $[n+1]$.  However, the general principles of symmetry, formalized by Lemma~\ref{lemma:pmf-modular} and Equation~\eqref{eqn:equal-column-row}, still apply. i.e., each preference vector in $[n+1]^m$ is modularly congruent to $n$ other preference vectors in $[n+1]^m$, namely those which are obtained by shifting all $m$ preferences by some $k \in [n]$. Then, the expected number of preference vectors in $[n+1]^m$ which leave spot $i$ vacant is equivalent for all $i \in [n+1]$, despite the fact that there are many more possible combinations of vacant spots.

Next, by symmetry, each of the $n+1$ spots has equal probability of being vacant. Since there are always $n+1-m$ vacant spots after all $m$ cars have parked, we know that the probability that an arbitrary spot in $[n+1]$ is vacant is $(n+1-m)/(n+1)$. Therefore, the expected number of preference vectors in $[n+1]^m$ which are parking functions is
\begin{equation}\label{eq:exp m<=n}
\mathbb{E}[\#\{\alpha \in [n+1]^m \cap \alpha \in \PF{n}\}] = \frac{n+1-m}{n+1} \cdot (n+1)^m = (n+1-m)(n+1)^{m-1}.
\end{equation}

But, generalizing the result of Lemma~\ref{lemma:does-not-contain}, we know that if a preference vector in $[n+1]^m$ is a parking function, then none of the $m$ cars may prefer spot $n+1$. Therefore, our effective sample space is the set of preference vectors in $[n]^m$. Since we choose uniformly at random, we divide the expectation in \eqref{eq:exp m<=n} by $n^m$, the size of this sample space, to obtain the probability in question. That is,
\begin{align*}
\PP(\alpha \in \PF{n} | \alpha \in [n]^m) = \frac{(n+1-m)(n+1)^{m-1}}{n^m}.
\end{align*}
\end{proof}

\section{Related combinatorial results} \label{section:relatedresults}

In this section, we present various combinatorial properties of parking, both on a one-way and on a circular street.

\subsection{Lucky and unlucky cars} \label{subsections:lucky}

Gessel and Seo~\cite{Gessel} defined a car being ``lucky" if it is able to park in its preferred spot. Similarly, we say that a car is ``unlucky" if it cannot park in its preferred spot. Notice that the first car is always lucky, since it enters an empty street. Therefore, we may use the convention that the leading preference is fixed to spot 1 when parking on a circular street.

\begin{customthm}{7}\label{thm:sumsofprods}
Suppose $n$ cars park on a circle with spots $[n+1]$. The expected number of preference vectors in $[n+1]^n$ containing $0 \le k < n$ unlucky cars is
\begin{eqnarray}\label{eqn:sumsofprods}
U_n(k) = n! \sum\limits_{C_k \vdash \{2,\dots,n\}} \prod\limits_{i \in C_k} \frac{i-1}{(n+1)-(i-1)},
\end{eqnarray}
\noindent where $C_k$ is a size-$k$ subset of the cars $\{2,\dots,n\}$, and each
car $i \in C_k$ is unable to park at its preferred spot.
\end{customthm}

\begin{proof}
First, consider the case for $k=0$, i.e., no cars are unlucky. Recall that the leading preference is fixed as spot 1. Here, there is only one unique approach to choose 0 unlucky cars among the remaining set of cars $\{2, \dots, n\}$. For car 2 to be lucky, it must prefer one of the $n$ vacant spots. Similarly, for car 3 to be lucky, it must prefer one of the $n-1$ vacant spots. Then, in general, there are $n(n-1) \cdots 2$ combinations of preferences which allow all cars to be lucky. Hence, $U_n(0) = n!$.

Next, let $C_k \vdash \{2,\dots,n\}$ denote an arbitrary combination of $k$ cars
designated as unlucky. When each car $i \in C_k$ enters the street, since it is
unlucky, it must prefer one of the $i-1$ occupied spots. That is, in the
descending product $n!$, for each $i \in C_k$, we replace the term $(n+1)-(i-1)$
with $i-1$. This is done algebraically by dividing $n!$ by $(n+1)-(i-1)$, then
multiplying by $i-1$. 
Summing over all possible combinations $C_k$ yields $U_n(k)$.
\end{proof}

For completion, we also relate this to the more traditional problem of parking $n$ cars on a one-way street with $n$ spots.
\begin{corollary}\label{coral:generating-one-way}
Suppose $n$ cars park on a one-way street with $n$ spots. The expected number of preference vectors in $[n]^n$ containing $0 \le k < n$ unlucky cars is
\begin{eqnarray}\label{eqn:generating-one-way}
\mathcal{U}_n(k) = n! \sum\limits_{C_k \vdash \{2,\dots,n\}} \prod\limits_{i \in
C_k} \frac{i-1}{n-(i-1)}.
\end{eqnarray}
\end{corollary}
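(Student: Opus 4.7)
The plan is to directly adapt the combinatorial counting argument from the proof of Theorem~\ref{thm:sumsofprods}, replacing the circle of $n+1$ spots by the one-way street of $n$ spots. The only structural change is in the number of vacant spots available when car $i$ enters, which becomes $n-(i-1)$ instead of $(n+1)-(i-1)$; this accounts for the replacement of $n+1$ by $n$ in the denominator of the formula.

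First I would handle the base case $k=0$, in which every car is lucky. Car $1$ enters an empty street and contributes $n$ preference choices. For each $i\ge 2$, for car $i$ to be lucky its preference must avoid the $i-1$ currently occupied spots, giving $n-(i-1)$ valid choices. Multiplying these counts yields $n(n-1)\cdots 1 = n!$, matching the formula $\mathcal{U}_n(0)=n!$.

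For $k\ge 1$, I would fix a size-$k$ subset $C_k\subseteq\{2,\ldots,n\}$ of cars designated to be unlucky, and argue exactly as in Theorem~\ref{thm:sumsofprods}. For each $i\in C_k$, car $i$ must prefer one of the $i-1$ occupied spots rather than one of the $n-(i-1)$ vacant ones; algebraically, this replaces the factor $n-(i-1)$ in the all-lucky product $n!$ by $i-1$, which is the same as multiplying by $(i-1)/(n-(i-1))$. Summing over all size-$k$ subsets $C_k$ then recovers the stated expression for $\mathcal{U}_n(k)$.

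The main obstacle, not present in the circular setting, is that on a one-way street a car whose preferred spot is taken may fail to park entirely (for example, by moving backward from spot~$1$ or forward past spot~$n$). Consequently, the bookkeeping ``exactly $i-1$ spots are occupied when car $i$ arrives'' can fail for non-parking-function $\alpha$. I expect to address this by first establishing the identity directly on parking functions, where the count is exact, and then extending to all of $[n]^n$ by linearity of expectation using the modular-symmetry ideas from Section~\ref{subsection:probabilityandparkingfunctions} together with the $p$-invariance of Theorem~\ref{theorem:pollakcoin}, so that the contributions from failed-parking configurations are accounted for correctly in the expected count.
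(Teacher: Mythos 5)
Your first two paragraphs are exactly the paper's own proof: the published argument for this corollary is a one-sentence remark that the proof of Theorem~\ref{thm:sumsofprods} carries over verbatim with the denominator $(n+1)-(i-1)$ replaced by $n-(i-1)$, which is precisely your base case plus replacement step. Up to that point you and the paper take the same route.

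The substance is in your third paragraph, where you have correctly identified an obstacle that the paper's proof passes over in silence --- but your proposed resolution is a gap, not an argument. On the circle every car parks, so exactly $i-1$ spots are occupied when car $i$ arrives and the factor $\frac{i-1}{(n+1)-(i-1)}$ is justified; on the one-way street a displaced car can exit (backward past spot $1$ or forward past spot $n$), after which later cars see fewer than $i-1$ occupied spots and the product structure breaks. Neither of the tools you invoke repairs this: Lemma~\ref{lemma:pmf-modular} is a cyclic-shift symmetry with no analogue on the segment, and Theorem~\ref{theorem:pollakcoin} concerns the probability that all cars park, not the occupancy profile seen by car $i$. In fact no argument can close the gap, because the stated identity fails under the model as written. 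Take $n=3$, $k=2$: the formula gives $\mathcal{U}_3(2)=3!\cdot\frac{1}{2}\cdot\frac{2}{1}=6$, but summing the probability that both cars $2$ and $3$ are unlucky over the nine vectors with $a_1=a_2$ (the only ones for which car $2$ can be unlucky) gives $5$ for every $p$. For instance, for $(1,1,2)$ car $2$ exits on tails and car $3$ is then lucky, and for $(3,3,2)$ car $2$ exits on heads and car $3$ is then lucky, so these two vectors contribute $p+(1-p)=1$ to the $k=2$ count rather than the $2$ the formula requires. So the step you flagged genuinely fails; the clean product formula only holds if one modifies the setup so that every displaced car still parks (as on the circle), and ``establish it on parking functions, then extend by symmetry and $p$-invariance'' does not supply such a modification --- the restriction to parking functions gives the different count $q\prod_{i=1}^{n-1}[i+(n-i+1)q]$ of Equation~\eqref{eq:last}, with total mass $(n+1)^{n-1}$ rather than $n^n$. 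The same defect is present, unremarked, in the paper's own proof of the corollary.
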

\begin{proof}
The proof proceeds similarly to that for Theorem~\ref{thm:sumsofprods}, with the only difference being in the denominator, where we subtract 1, since each car may choose among $n$ spots only.
\end{proof}

\begin{remark}
This inner product is identical to the generating function derived by Khidr and El-Desouky~\cite{stirling}, which is based on the Stirling numbers of the first kind. This generating function produces the triangular array documented under OEIS \href{https://oeis.org/A071208}{A071208}.
\end{remark}

\subsection{Solving an open problem on polynomials} \label{subsection:oeis}

In this section, we present a solution which relates Theorem~\ref{thm:sumsofprods} to an open problem proposed by Novelli and Thibon~\cite{novelli2020duplicial}, in which the authors derived a generating polynomial and inquired about interpretations of the triangular array it produced. For reference, this sequence is documented under OEIS \href{https://oeis.org/A220884}{A220884}.

\begin{customthm}{8}\label{thm:oeis-triangle}
Let $Q_n(q)$ be the generating polynomial $\prod_{k=2}^n [(n+1-k)q + k]$. Then
\begin{equation}
Q_n(q) = \sum_{k=0}^{n-1} U_n(k) q^k.
\end{equation}
\end{customthm}

\begin{proof}
Recall that we fix the leading preference to be spot 1 on a circular street, since the first car is always lucky. Then, rewriting $\prod_{k=2}^n [(n+1-k)q + k] = \prod_{k=2}^{n} [(k-1)q + (n+1)-(k-1)]$, we see that the two coefficients in the $k$-th factor denote the number of preferences which make car $k$ unlucky and lucky, respectively. Therefore, when expanding the first $0 \le j \le n-1$ factors, i.e., $\prod_{k=2}^{j+1} [(k-1)q + (n+1)-(k-1)]$, and grouping together same-degree terms, each coefficient of $q^i$, for all $i \in \{0, \dots, j\}$, represents the expected number of \textit{partial} preference vectors in $[n+1]^{j+1}$ containing $i$ unlucky cars.
\end{proof}

\begin{remark}
In the left-aligned triangular array documented under OEIS \href{https://oeis.org/A220884}{A220884}, the entry in row $i$, column $j$ denotes the expected number of preference vectors in $[i+1]^i$ which have $j$ unlucky cars, with the first car parking at spot $1$ always. See Table \ref{tab:triangular}.
\end{remark}

\begin{table}[h]
\centering
\begin{floatrow}
\capbtabbox[1.3\linewidth]{%
 \scalebox{1.0}{\begin{tabular}{|c|c|c|c|c|c|c|} 
\hline
 \diagbox{$i$}{$j$}& $0$ & $1$ & $2$ & $3$ & $4$ & $5$ \\\hline
 $0$ & $1$   &      &       &       &    &    \\ 
\hline
$1$ & $1$   & $0$  &       &       &    &    \\ 
\hline
$2$ & $2$   & $1$   & $0$ &       &     &   \\ 
\hline
$3$ & $6$  & $8$  & $2$   &  $0$ &    &    \\ 
\hline
$4$ & $24$  & $58$  & $37$  & $6$   & $0$  & \\ 
\hline
$5$ & $120$ & $444$ & $504$ & $204$ & $24$ & $0$\\

\hline
\end{tabular}}}{%
   \caption{Triangular array under OEIS \href{https://oeis.org/A220884}{A220884}.}
    \label{tab:triangular}%
}
\end{floatrow}
\end{table}

\subsection{Weighted Pascal's triangle} \label{subsection:pascal}

The notion of partial preference vectors in the proof of Theorem~\ref{thm:oeis-triangle} alludes to a recurrence relation which bears very strong resemblance to that for Pascal's triangle.

\begin{definition}\label{def:recurrence relation}
Let $E_n(i,k)$, with $0 \le k < i \le n$, denote the expected number of
preference vectors in $[n+1]^i$ containing $k$ unlucky cars. Then
\begin{equation}\label{eqn:weightedpascal}
E_n(i,k)=((n+1)-(i-1))\cdot E_n(i-1,k)+(i-1)\cdot E_n(i-1,k-1).
\end{equation}
\end{definition}
We provide a sample calculation to show the mechanics of the recurrence relation of \eqref{eqn:weightedpascal}. For $n=4$, $i=3$ and $k=1$:
\begin{align*}
E_{4}(3,1)=(5-(3-1))\cdot E_4(2,1) +(3-1)\cdot E_4(2,0)
&=3\cdot1 + 2\cdot 4=3+8=11.
\end{align*}

We refer to the triangular array produced by \eqref{eqn:weightedpascal} as the ``Weighted Pascal's Triangle" due to the weighted coefficients. An example of this triangle, for a length $i=4$ sequence, is presented in Table~\ref{tab:pascalweighted}.

Notice how \eqref{eqn:weightedpascal} contrasts with the recurrence relation for Pascal's triangle: 
\begin{equation}\label{eqn:pascal}
T(n,d)=T(n-1,d)+T(n-1,d-1).
\end{equation}

Note that $n$ in this case represents the row of Pascal's triangle, and $d$ represents the column of the triangle. An example of such a triangle for $n=4$ is presented in Table~\ref{tab:pascal}. 

Notice that \eqref{eqn:weightedpascal} reduces to \eqref{eqn:pascal}, when both $n-i=1$ and $i=2$.

\begin{table}[h]
\centering
\hspace{-1in}
\begin{floatrow}
\capbtabbox[1.3\linewidth]{%
 \scalebox{1.00}{\begin{tabular}{|c|c|c|c|c|c|} 
\hline
\diagbox{$n$}{$d$} & $0$ & $1$ & $2$ & $3$ & $4$  \\\hline
$0$ & $1$   &       &       &  &      \\ 
\hline
$1$ & $1$   & $1$   &       &   &     \\ 
\hline
$2$ & $1$   & $2$   & $1$   &    &      \\ 
\hline
$3$ & $1$   & $3$   & $3$   & $1$   &       \\ 
\hline
$4$ & $1$   & $4$   & $6$   & $4$   & $1$   \\ 
\hline
\end{tabular}}
}{%
  \caption{Pascal's triangle for $0\leq d\leq n\leq 4$.}
    \label{tab:pascal}%
}
\hspace{-1in}
\capbtabbox[1.3\linewidth]{%
 \scalebox{1.0}{\begin{tabular}{|c|c|c|c|c|c|} 
\hline
 \diagbox{$i$}{$k$}& $0$ & $1$ & $2$ & $3$ & $4$  \\\hline
 $0$ & $0$   &       &       &       &        \\ 
\hline
$1$ & $1$   & $0$   &       &       &        \\ 
\hline
$2$ & $4$   & $1$   & $0$   &       &        \\ 
\hline
$3$ & $12$  & $11$  & $2$   & $0$   &        \\ 
\hline
$4$ & $24$  & $58$  & $37$  & $6$   & $0$    \\

\hline
\end{tabular}
}}{%
   \caption{Weighted Pascal's triangle for $n=4$.}
    \label{tab:pascalweighted}%
}
\end{floatrow}
\end{table}

It is useful to think about two limiting cases when discussing Table~\ref{tab:pascalweighted}. Observe the case of $i=0$ and $k=0$. The entry in the table/triangle is giving us the number of expected sequences with zero unlucky cars, when there are no cars parked: such a sequence does not exist because there are no parked cars on the street!
Another interesting limiting case occurs in the last row with $i = n$. Notice that each cell in this row can be computed with Theorem~\ref{thm:sumsofprods}. That is, $E_n(n,k) = U_n(k)$. For example, consider the case for $i=4$ and $k=0$. This entry tells us the expected number of sequences which have no unlucky cars, after four cars have parked, which is $U_4(0) = 4!$.

\section{Future research} \label{sec:futurework}
The combinatorial results derived in Section~\ref{section:relatedresults} connect to existing works which provide promising avenues for further analysis. For example, Diaconis and Hicks~\cite{diaconis} cite the generating function:
\begin{equation}\label{eq:last}
\sum_{\forall \alpha \in \PF{n}} q^{L(\alpha)} = q \prod_{i=1}^{n-1}[i+(n-i+1)q],
\end{equation}
where $L(\alpha)$ counts the number of lucky cars in the one-way classical parking function $\alpha \in \PF{n}$, chosen uniformly. In \cite{diaconis} it was shown that the cumulative density function of $L(\alpha)$ is asymptotically normal, which implies that the cumulative density function of $\sum U_n(k)$ as investigated in Theorem~\ref{thm:sumsofprods} is also asymptotically normal. The authors are further interested in similarly researching the distribution of the generating function for $U_n(k)$ for each valid $k$.

\section{Acknowledgements}

The authors thank Kevin Wang for his insights which made Section~\ref{subsec:coin-m-n} and Section~\ref{section:relatedresults} possible.
Pamela E. Harris and Mei Yin acknowledge helpful conversations in the Special Session on Algebraic, Geometric, and Topological Combinatorics at the 2022 AMS Central Fall Sectional Meeting in El Paso, organized by Duval Art, Caroline Klivans, and Jeremy L. Martin.
Rodrigo Ribeiro would like to thank Pedro Franklin from Federal University of Uberl\^{a}ndia for all the helpful conversations about statistics and simulations on Friday nights.

\bibliographystyle{plain}
\bibliography{Bibliography.bib}

\begin{thebibliography}{10}

\bibitem{bellin2021}
Etienne Bellin.
\newblock Asymptotic behaviour of the first positions of uniform parking
  functions.
\newblock {\em arXiv:2108.08661}, 2021.

\bibitem{carlson2020parking}
Joshua Carlson, Alex Christensen, Pamela~E. Harris, Zakiya Jones, and
  Andrés~Ramos Rodríguez.
\newblock Parking functions: Choose your own adventure.
\newblock {\em The College Mathematics Journal}, 52(4):254--264, 2021.

\bibitem{diaconis}
Persi Diaconis and Angela Hicks.
\newblock Probabilizing parking functions.
\newblock {\em Advances in Applied Mathematics}, 89:125--155, 2017.

\bibitem{Esseen}
Carl-Gustav Esseen.
\newblock Fourier analysis for distribution functions.
\newblock {\em Acta Math.}, 77:1--125, 1945.

\bibitem{foata}
Dominique Foata and John Riordan.
\newblock Mappings of acyclic and parking functions (short communication).
\newblock {\em Aequationes mathematicae}, 9:299--300, 1973.

\bibitem{Gessel}
Ira~M. Gessel and Seunghyun Seo.
\newblock A refinement of cayley's formula for trees.
\newblock {\em The Electronic Journal of Combinatorics}, 11, 2006.

\bibitem{Kimberlyy}
Kimberly~P. Hadaway and Pamela~E. Harris.
\newblock Honk! honk!, part 1 ; an introduction to parking functions.
\newblock {\em Girls’ Angle Bulletin}, 14(2):15--20, 2021.

\bibitem{Kimberly}
Kimberly~P. Hadaway and Pamela~E. Harris.
\newblock Honk! honk!, part 2 ; an introduction to parking functions.
\newblock {\em Girls’ Angle Bulletin}, 14(3):14--20, 2021.

\bibitem{KY}
Richard Kenyon and Mei Yin.
\newblock Parking functions: From combinatorics to probability.
\newblock {\em arXiv:2103.17180}, 2021.

\bibitem{stirling}
A.M. Khidr and B.S. El-Desouky.
\newblock A symmetric sum involving the stirling numbers of the first kind.
\newblock {\em European Journal of Combinatorics}, 5(1):51--54, 1984.

\bibitem{KM}
John~E. Kolassa and Peter McCullagh.
\newblock Edgeworth series for lattice distributions.
\newblock {\em Ann. Statist.}, 18:981--985, 1990.

\bibitem{Konheim}
Alan~G. Konheim and Benjamin Weiss.
\newblock An occupancy discipline and applications.
\newblock {\em SIAM J. Appl. Math.}, 14:1266--1274, 1966.

\bibitem{levin2017markov}
David~A. Levin and Yuval Peres.
\newblock {\em Markov Chains and Mixing Times}.
\newblock MBK. American Mathematical Society, 2017.

\bibitem{novelli2020duplicial}
Jean-Christophe Novelli and Jean-Yves Thibon.
\newblock Duplicial algebras and lagrange inversion.
\newblock {\em Algebraic Combinatorics, Resurgence, Moulds and Applcations
  (CARMA)}, 1, 2020.

\bibitem{Pitman}
Jim Pitman.
\newblock Forest volume decompositions and abel - cayley - hurwitz multinomial
  expansions.
\newblock {\em J. Comb. Theory, Ser. {A}}, 98(1):175--191, 2002.

\bibitem{Riordan}
John Riordan.
\newblock {\em Combinatorial Identities}.
\newblock Wiley series in probability and mathematical statistics. Wiley, 1968.

\bibitem{tian2020generalizing}
Melanie Tian and Enrique Trevino.
\newblock Generalizing parking functions with randomness.
\newblock {\em The Electronic Journal of Combinatorics}, 29:P3.13, 2022.

\end{thebibliography}

\end{document}